\newtheorem{thm}{Theorem}[section]
\newtheorem{cor}[thm]{Corollary}
\newtheorem{lem}[thm]{Lemma}
\newtheorem{prop}[thm]{Proposition}
\theoremstyle{definition}
\newtheorem{defn}[thm]{Definition}
\theoremstyle{remark}
\newtheorem{rem}[thm]{Remark}
\newtheorem{ex}[thm]{\bf Example}
\newcommand{\bt}{\begin{thm}}
\newcommand{\et}{\end{thm}}
\newcommand{\bc}{\begin{cor}}
\newcommand{\ec}{\end{cor}}
\newcommand{\bl}{\begin{lem}}
\newcommand{\el}{\end{lem}}
\newcommand{\bp}{\begin{prop}}
\newcommand{\ep}{\end{prop}}
\newcommand{\bd}{\begin{defn}}
\newcommand{\ed}{\end{defn}}
\newcommand{\br}{\begin{rem}}
\newcommand{\er}{\end{rem}}
\newcommand{\bpr}{\begin{proof}}
\newcommand{\epr}{\end{proof}}
\newcommand{\bex}{\begin{ex}}
\newcommand{\eex}{\end{ex}}
\newcommand{\bcd}{\begin{CD}}
\newcommand{\ecd}{\end{CD}}
\newcommand{\bi}{\begin{itemize}}
\newcommand{\ei}{\end{itemize}}
\newcommand{\be}{\begin{enumerate}}
\newcommand{\ee}{\end{enumerate}}
\newcommand{\ba}{\begin{array}}
\newcommand{\ea}{\end{array}}
\newcommand{\beq}{\begin{equation}}
\newcommand{\eeq}{\end{equation}}
\newcommand{\beqa}{\begin{eqnarray}}
\newcommand{\eeqa}{\end{eqnarray}}
\newcommand{\bca}{\begin{cases}}
\newcommand{\eca}{\end{cases}}
\newcommand{\bal}{\begin{aligned}}
\newcommand{\eal}{\end{aligned}}
\newcommand{\ts}{\textstyle}
\newcommand{\N}{{\mathbb N}}
\newcommand{\Z}{{\mathbb Z}}
\newcommand{\R}{{\mathbb R}}
\newcommand{\C}{{\mathbb C}}
\newcommand{\T}{{\mathbb T}}
\newcommand{\D}{{\mathbb D}}
\newcommand{\bs}{\boldsymbol}
\newcommand{\spn}{{\operatorname{span}}}
\newcommand{\re}{{\operatorname{Re}}}
\newcommand{\im}{{\operatorname{Im}}}
\newcommand{\ad}{{\operatorname{ad}}}
\newcommand{\<}{\langle}
\renewcommand{\>}{\rangle}
\newcommand{\uk}{|\kern-3.3pt\uparrow\>}
\newcommand{\dk}{|\kern-3.3pt\downarrow\>}
\newcommand{\ub}{\<\uparrow\kern-3.3pt|}
\newcommand{\db}{\<\downarrow\kern-3.3pt|}
\begin{document}

\title{\bf The CMV bispectral problem}

\author{
F.A. Gr\"unbaum$^{1}$,
\; 
L. Vel\'azquez$^{2,}\footnote{Corresponding author: \texttt{velazque@unizar.es}}$
}

\date{\footnotesize
$^1$ Department of Mathematics, University of California, Berkeley, USA
\break 
$^2$ Departamento de Matem\'atica Aplicada \& Instituto Universitario de Matem\'aticas y Aplicaciones (IUMA), Universidad de Zaragoza, Spain
}

\maketitle

\begin{abstract}

A classical result due to Bochner classifies the orthogonal polynomials on the real line which are common eigenfunctions of a second order linear differential operator. We settle a natural version of the Bochner problem on the unit circle which answers a similar question concerning orthogonal Laurent polynomials and can be formulated as a bispectral problem involving CMV matrices. We solve this CMV bispectral problem in great generality proving that, except the Lebesgue measure, no other one on the unit circle yields a sequence of orthogonal Laurent polynomials which are eigenfunctions of a linear differential operator of arbitrary order. Actually, we prove that this is the case even if such an eigenfunction condition is imposed up to finitely many orthogonal Laurent polynomials. 
\end{abstract}

{\footnotesize

\noindent{\it Keywords and phrases}: bispectral problems, differential operators, CMV matrices, orthogonal Laurent polynomials, measures on the unit circle
\smallskip

\noindent{\it (2010) AMS Mathematics Subject Classification}: 42C05, 47B36.

}

\section{Introduction} \label{sec:INTRO}

The motivation for the problem we address here can be traced to work in signal processing started by C.~Shannon \cite{S}. Addressing his problem required finding and exploiting some remarkable mathematical miracles and was accomplished in a series of papers by three workers at Bell Labs in the 1960's: David Slepian, Henry Landau and Henry Pollak, see \cite{SLP1,SLP2,SLP3,SLP4,S2,SLP5,S1}. The most important of these miracles is the existence of a second order {\it differential} operator that commutes with Shannon's time-and-band limiting {\it integral} operator.

In an effort to understand and extend the range of applicability of these miracles one of us introduced the so called ``bispectral problem", see \cite{DG}. For connections of this notion with the ``time-and-band limiting problem" of Shannon see for instance \cite{CG,G33,G1,G4,G6,GPZ}. The basic idea is that bispectral instances should lead to situations featuring the remarkable algebraic properties exploited by D.~Slepian, H.~Landau and H.~Pollak.
A strict connection between these two properties has not yet been
established.

These algebraic properties have important numerical/practical consequences. For a very recent account of several computational issues see \cite{BK14,JKS,ORX}. For new areas of applications involving (sometimes) vector-valued quantities on the sphere, see \cite{JB,PS,SD,SDW}. From a different numerical point of view see \cite{EMT}.

The study of the bispectral problem has moved in several fronts and led many
unsuspected areas of mathematics, for a sample see \cite{GT1,T2,GPT1,GPT5,GI,GHH,GH4,GH7,GH3,GDar,GY}.

While the initial problem of C.~Shannon was formulated in a continuous-continuos setup, the case of Fourier series (a discrete-continuous version) was handled by D.~Slepian in \cite{SLP5}, and the case of the DFT (a discrete-discrete version) was discussed in \cite{G9}. 

If one replaces the unit circle by the real line, these bispectral problems have a precedent in a continuous-discrete setup in the work of S.~Bochner \cite{B} and previous workers such as E.~Routh \cite{R}. They classified all families of orthogonal polynomials on the real line that admit a common second order differential operator having all of them as eigenfunctions. This constitutes a bispectral situation since orthogonal polynomials are (formal) eigenvectors of Jacobi matrices. The first step in going beyond second order differential operators was taken by \cite{Krall}. This issue was later addressed in other situations, such as second order q-difference equations, where the Askey-Wilson polynomials were found to be the most general case. If one gets away from polynomials the class of solutions is much larger, see \cite{GH4}. 

With all this as background we can state the contents of the paper: the natural extension of \cite{SLP5} by replacing the Lebesgue measure (the case of the Fourier series) by an arbitary measure on the unit circle leads to a new bispectral problem, which we consider here. This takes us back to \cite{Sz} who talked about orthogonal polynomials with respect to arbitrary measures on the unit circle. A better approach is taken in \cite{CMV,W} (see also \cite{Si,Si2}), where one applies the Gram-Schmidt process to all the integer powers --and not only the positive ones as in \cite{Sz}-- and obtains an orthonormal basis for the corresponding $L^2$ space. We study the bispectral problem for this basis of Laurent polynomials.

This bispectral problem constitutes the natural analogue on the unit circle of the Bochner problem on the real line. The role of the Jacobi matrices in the ad-conditions is played now by its unitary counterpart, the CMV matrices \cite{CMV,Si,Si2,W}, which encode the recurrence relation for the orthonormal basis of Laurent polynomials. In other words, our aim is to find all the orthonormal Laurent polynomials on the unit circle which are common eigenfunctions of a linear differential operator.
 
We will refer to this as the CMV bispectral problem since it can be formulated as a bispectral problem involving CMV matrices: to find all the CMV matrices whose (formal) eigenvectors, given by the corresponding orthonormal Laurent polynomials, are simultaneously eigenvectors of a linear differential operator.  

The ad-conditions introduced in \cite{DG} have been the main workhorse to study different bispectral situations \cite{GH3,GH4,GI}, and is the approach we are going to follow here, see Section~\ref{sec:BSP-CMV}. However, the standard ad-conditions become too messy to solve the CMV bispectral problem by applying them directly. Instead of this, we will exploit the unitarity and factorization properties of CMV matrices to transform the related ad-conditions into what we call the Hermitian ad-conditions because they come from the calculation of a Hermitian matrix. The result is a reduction of the ad-conditions in number and complexity, which allows us to solve them for second order linear differential operators, see Section~\ref{sec:herm-ad}.

Nevertheless, going beyond second order differential operators calls for more effective tools than solving ad-conditions by brute force. This is the aim of Sections~\ref{sec:ad-I} and \ref{sec:ad-F}, which develop the ad-integration and ad-factorization of ad-conditions. The ad-integration refers to a reduction in the order of the difference equations involved in the ad-conditions. The idea of solving ad-conditions by means of ad-integration was first advanced in \cite{GH3} and then fully developed in \cite{Haine} for the case of Jacobi matrices. The adaptation of this technique to CMV matrices is the objective of Section~\ref{sec:ad-I}. 

On the other hand, while the standard ad-conditions are defined in terms of the power of the ad-operator, given by a single commutator, the more useful Hermitian ad-conditions are not given by the power of any operator. Despite of this, Section~\ref{sec:ad-F} proves that the Hermitian ad-conditions factorize into lower order ones. 

These are the main tools to tackle the general CMV bispectral problem in Section~\ref{sec:GEN-BSP}. We not only solve the CMV bispectral problem for linear differential operators of arbitrary order, but also assuming the corresponding eigenfunction condition up to finitely many orthonormal Laurent polynomials. Furthermore, the solution to this problem follows from the answer to a more general `bispectral' question in which a tridiagonal matrix takes the place of the diagonal matrix of eigenvalues for the differential operator. In all these cases we find that the only solution to the CMV bispectral problem is given by the integer powers of a complex variable, which are the orthonormal Laurent polynomials related to the Lebesgue measure on the unit circle. 

This is in contrast with the very rich structure of the solutions to the analogous problem on the real line. As it is pointed out in the conclusions of Section~\ref{sec:CO}, this negative result should not be viewed as the end of the story, but could help us to focus our attention on those situations on the unit circle which could end in bispectral problems with non-trivial solutions. Besides, the triviality of the CMV bispectral problem can be used to test on the unit circle the not fully understood connections of bispectrality with the miracles behind the time-and-band limiting problem and its unexpected links with integrable systems.

\section{Bispectral CMV matrices and ad-conditions} \label{sec:BSP-CMV}

CMV matrices naturally arise in the study of orthogonality on the unit circle \cite{CMV,Si,Si2,W}. For each probability measure $\mu$ with an infinite support lying on the unit circle $\T:=\{z\in\C : |z|=1\}$ we can consider the sequences $(\chi_n)_{n\geq0}$ and $(x_n)_{n\geq0}$ of orthonormal Laurent polynomials (OLP) coming from the orthonormalization in $L^2_\mu$ of
$(1,z,z^{-1},z^2,z^{-2},\dots)$ and $(1,z^{-1},z,z^{-2},z^2,\dots)$ respectively. Both sequences are related by the substar operation in the vector space $\C[z,z^{-1}]$ of Laurent polynomials,
$$
 \chi_n(z) = x_{n*}(z),
 \kern40pt
 f_*(z) = \overline{f(1/\overline{z})} \quad \forall f\in\C[z,z^{-1}].
$$
The probability measures on $\T$ with infinite support are parametrized by the Verblunsky coefficients, a sequence $(\alpha_n)_{n\geq0}$ in the open unit disk $\D:=\{z\in\C : |z|<1\}$ which generates the OLP via the five term recurrence relations
\begin{align} 
 \label{eq:CX}
 & {\cal C}^t \bs\chi(z) = z\bs\chi(z), 
 \quad 
 \bs\chi = \begin{pmatrix} \chi_0 \\ \chi_1 \\ \vdots \end{pmatrix},
 \kern40pt
 {\cal C} \bs x(z) = z\bs x(z), 
 \quad 
 \bs x = \begin{pmatrix} x_0 \\ x_1 \\ \vdots \end{pmatrix}, &
 \\[3pt]
 \label{eq:CMV}
 & {\cal C} =
 \begin{pmatrix}
	\overline{\alpha}_0 &  \rho_0\overline{\alpha}_1 &  \rho_0\rho_1 
	&  0 &  0 &  0 &  0 &  \dots
	\\
	\rho_0 &  -\alpha_0\overline{\alpha}_1 &  -\alpha_0\rho_1 
	&  0 &  0 &  0 &  0 &  \dots
	\\
	0 &  \rho_1\overline{\alpha}_2 &  -\alpha_1\overline{\alpha}_2 
	&  \rho_2\overline{\alpha}_3 &  \rho_2\rho_3 &  0 &  0 &  \dots
	\\
	0 &  \rho_1\rho_2 &   -\alpha_1\rho_2 &  -\alpha_2\overline{\alpha}_3 
	&  -\alpha_2\rho_3  &  0 &  0 &  \dots
	\\
	0 &  0 &  0 &  \rho_3\overline{\alpha}_4 &  -\alpha_3\overline{\alpha}_4 
	&  \rho_4\overline{\alpha}_5 &  \rho_4\rho_5 &  \dots
	\\
	0 &  0 &  0 &  \rho_3\rho_4 &  -\alpha_3\rho_4 
	&  -\alpha_4\overline{\alpha}_5 &  -\alpha_4\rho_5 &  \dots
	\\
	\dots &  \dots &  \dots &  \dots &  \dots &  \dots &  \dots &  \dots
 \end{pmatrix}, &
\end{align}
where $\rho_n=\sqrt{1-|\alpha_n|^2}$. Both the five-diagonal unitary matrix $\cal C$ and its transpose ${\cal C}^t$ are named CMV matrices. The identities in \eqref{eq:CX} follow from the basic ones
\begin{equation} \label{eq:LM}
 \begin{aligned}
 	& z\bs x(z) = {\cal L} \bs\chi(z), 
 	& \quad & {\cal L} =
	\left( 
 	\begin{smallmatrix} 
 		\Theta_0 \\ & \Theta_2 \\ & & \Theta_4 \\[-4pt] & & & \ddots 
 	\end{smallmatrix}
	\right),
 	\\[3pt]
 	& \bs\chi(z) = {\cal M} \bs x(z),  
 	& & {\cal M} = 
	\left(
 	\begin{smallmatrix} 
 		1 \\ & \kern3pt \Theta_1 \\ & & \Theta_3 \\[-4pt] & & & \ddots 
 	\end{smallmatrix}
	\right),
 \end{aligned}
 \qquad 
 \Theta_n = 
 \begin{pmatrix} 
	\overline\alpha_n & \rho_n \\ \rho_n & -\alpha_n 
 \end{pmatrix},
\end{equation}
hence ${\cal C}={\cal L}{\cal M}$ and ${\cal C}^t={\cal M}{\cal L}$ factorize as a product of a couple of $2 \times 2$-block diagonal symmetric unitary matrices. Using the shift matrix  
\begin{equation} \label{eq:shift}
 S = 
 \left(
 	\begin{smallmatrix}
 	0 & \kern4pt 1
 	\\[3pt]
 	& \kern4pt 0 & \kern4pt 1
 	\\[3pt]
 	& & \kern4pt 0 & 1
 	\\[-4pt]
 	& & & \ddots & \ddots
 \end{smallmatrix}
 \right)
\end{equation}
and its adjoint $S^\dag$, these factors can be expressed as
\begin{equation} \label{eq:LM-shift}
\begin{aligned}
 & {\cal L} = {\cal A}_e + {\cal B}_e S + S^\dag {\cal B}_e, 
 & \qquad &
 {\cal M} = {\cal A}_o + {\cal B}_o S + S^\dag {\cal B}_o, &
 \\
 & {\cal A}_e = 
 \left(
 \begin{smallmatrix}
 	\overline\alpha_0 \\[2pt] & \kern-2pt -\alpha_0 
 	\\[2pt] & & \overline\alpha_2 \\[2pt] & & & \kern-2pt -\alpha_2
 	\\[-4pt]
 	& & & & \ddots
 \end{smallmatrix}
 \right), 
 \qquad
 & & {\cal A}_o = 
 \left(
 \begin{smallmatrix}
   	1 \\[2pt] & \kern5pt \overline\alpha_1 \\[2pt] & & \kern-2pt -\alpha_1 
 	\\[2pt] & & & \overline\alpha_2 \\[2pt] & & & & \kern-2pt -\alpha_2
 	\\[-4pt]
 	& & & & & \ddots
 \end{smallmatrix}
 \right),
 \\[2pt]
 & {\cal B}_e = 
 \left(
 \begin{smallmatrix}
 	\\[-2pt]
 	\rho_0 \\ & 0 \\ & & \rho_2 \\ & & & 0
 	\\[-5pt]
 	& & & & \ddots
 \end{smallmatrix}
 \right),
 \qquad
 & & {\cal B}_o = 
 \left(
 \begin{smallmatrix}
   	0 \\ & \rho_1 \\ & & 0 \\ & & & \rho_3 
 	\\[-5pt]
 	& & & & \ddots
 \end{smallmatrix}
 \right).
\end{aligned}
\end{equation}

For every $z\in\C\setminus\{0\}$, the relations in \eqref{eq:CX} identify $\bs x(z)$ and $\bs\chi(z)$ as formal eigenvectors with eigenvalue $z$ for the matrices ${\cal C}$ and ${\cal C}^t$ respectively. Actually, Proposition \ref{prop:ker} in the Appendix implies that $\bs x(z)$ and $\bs\chi(z)$ span the set of such formal eigenvectors. Therefore, the search for OLP which are also eigenfunctions of a linear differential operator can be understood as a bispectral problem, which we will call the CMV bispectral problem. Every linear differential operator 
\begin{equation} \label{eq:D}
 D = \sum_{k=0}^r D_k(z) \frac{d^k}{dz^k}
\end{equation} 
arising from the CMV bispectral problem maps $\C[z,z^{-1}]$ onto itself because any Laurent polynomial is a finite linear combination of OLP. Applying $D$ given by \eqref{eq:D} to the powers $z^k$ we see by induction on $k$ that the linear differential operators $D \colon \C[z,z^{-1}] \to \C[z,z^{-1}]$ are those with Laurent polynomial coefficients $D_k(z)$.

The two CMV bispectral problems related to the OLP $x_n$ or $\chi_n$ are essentially identical because both OLP are simultaneously eigenfunctions of a (different) linear differential operator. This is due to the equivalence
$$
 L x_n = \lambda_n x_n 
 \, \Leftrightarrow \, 
 L_* \chi_n = \overline\lambda_n \chi_n,   
 \qquad 
 \lambda_n\in\C,
$$
for any linear operator $L$ in $\C[z,z^{-1}]$, where $L_*$ is the linear operator in $\C[z,z^{-1}]$ defined by 
$$
 L_*f = (Lf_*)_* \quad \forall f\in\C[z,z^{-1}].
$$
In the case of a linear differential operator $D$, the substar operation yields also a linear differential operator $D_*$ of the same order as $D$, a fact that follows from the general composition law $(\widetilde{L}L)_*=\widetilde{L}_*L_*$ together with the substar of a single derivative, 
$$
 \left(\frac{d}{dz}\right)_* = -z^2 \frac{d}{dz}.
$$
Therefore, the relation
$$
 D x_n = \lambda_n x_n 
 \,\Leftrightarrow\, 
 D_* \chi_n = \overline\lambda_n \chi_n,
$$
shows that the CMV bispectral problem can be equivalently studied using $\cal C$ or ${\cal C}^t$. 

For concreteness, in what follows we will use the CMV matrix $\cal C$. In other words, we will search for OLP $x_n$ which are eigenfunctions of some linear differential operator $D$ of arbitrary order $r\geq1$, i.e.
\begin{equation} \label{eq:bsp}
 D x_n = \lambda_n x_n, \qquad \lambda_n\in\C,
\end{equation}
or equivalently
$$
 D \bs x = \Lambda \bs x, 
 \qquad 
 \Lambda =
 \left( 
 \begin{smallmatrix} 
 	\lambda_0 \\ & \lambda_1 \\ & & \lambda_2 \\[-4pt] & & & \ddots  
 \end{smallmatrix}
 \right).
$$
In this case the CMV matrix $\cal C$ related to $x_n$ will be called bispectral.

Actually, we will study a more general problem. 

First, we will ask for the linear differential operator $D$ in $\C[z,z^{-1}]$ to satisfy \eqref{eq:bsp} up to finitely many OLP. This is equivalent to stating that $D \bs x = \Omega \bs x$ with $\Omega$ diagonal up to a finite submatrix, i.e. $\Omega = \Omega_N \oplus \Lambda$ for some $N$, where $\Omega_N$ is the $N \times N$ principal submatrix of $\Omega$ and $\Lambda$ is diagonal.

Second, we will search for solutions of more general relations than \eqref{eq:bsp}, namely,
\begin{equation} \label{eq:tri}
 D x_n \in \spn \{x_{n-1},x_n,x_{n+1}\}.
\end{equation}
This three-term difference-differential equation reads as $D \bs x = \Omega \bs x$ with $\Omega$ tridiagonal. Indeed, we will assume \eqref{eq:tri} only from some index onwards, which is equivalent to stating that $\Omega$ is tridiagonal up to a finite submatrix.

These comments are the origin of the following definition. 

\begin{defn} \label{def:almost}
We say that an infinite matrix $\Omega$ is almost (tri)diagonal if it is (tri)diagonal up to a finite submatrix, i.e. $\Omega-\widetilde\Omega$ has finitely many non-zero entries for some infinite (tri)diagonal matrix $\widetilde\Omega$.
\end{defn}

With this terminology, the existence of a linear differential operator $D$ in $\C[z,z^{-1}]$ satisfying \eqref{eq:bsp} or \eqref{eq:tri} for large enough $n$ is equivalent to stating that $D \bs x = \Omega \bs x$ with $\Omega$ almost diagonal and almost tridiagonal respectively. 

All these cases are covered by the relation 
\begin{equation} \label{eq:bsp-band}
 D \bs x = \Omega \bs x, \qquad \Omega \text{ banded},
\end{equation} 
a general situation characterized by ``ad-conditions" (see \cite{DG}) involving the CMV matrix $\cal C$ related to $\bs x$. Such CMV ad-conditions, made explicit in Theorem~\ref{thm:ad-CMV} below, are given in terms of a linear operator $(\ad\,{\cal C})$ in the vector space of band matrices, defined by the commutator
$$
 (\ad\,{\cal C})\Omega = [{\cal C},\Omega] = {\cal C}\Omega-\Omega{\cal C}.
$$
By induction, its powers can be seen to have the explicit form
\begin{equation} \label{eq:ad power}
 (\ad\,{\cal C})^n \Omega = 
 \sum_{k=0}^n (-1)^k {n \choose k} {\cal C}^{n-k} \Omega {\cal C}^k. 
\end{equation} 
This operator is essential in the following result, key for this paper, which is a translation of the general ideas in \cite{DG} to the case of CMV matrices. It characterizes the relation \eqref{eq:bsp-band} in terms of CMV ad-conditions. Proposition \ref{prop:ker} in the Appendix will be crucial for the proof.  

In what follows, $I$ stands for the infinite identity matrix.

\begin{thm} \label{thm:ad-CMV}
Given a sequence $x_n$ of OLP on the unit circle with CMV matrix ${\cal C}$, the following conditions are equivalent for any band matrix $\Omega$:
\begin{itemize}
	\item[(i)] There is a linear differential operator $D$ of order at most $r$ 	such that $D \bs x = \Omega \bs x$.
	\item[(ii)] $(\ad\,{\cal C})^{r+1} \Omega =0$.  
\end{itemize}
\end{thm}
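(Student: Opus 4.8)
The plan is to transfer the matrix commutator $\ad\,{\cal C}$ into the commutator with multiplication by $z$ on $\C[z,z^{-1}]$, exploiting the single structural fact ${\cal C}\bs x(z)=z\bs x(z)$ together with the observation that ${\cal C}$ has $z$-independent entries and therefore commutes, when acting componentwise, with any operator acting on the variable $z$. Write $M_h$ for the operator of multiplication by $h\in\C[z,z^{-1}]$, abbreviate $M_z$ accordingly, and set $T:=\ad\,M_z=[M_z,\,\cdot\,]$. Given a band matrix $\Omega$, since $\{x_n\}$ is a basis of $\C[z,z^{-1}]$ there is a unique linear operator $L$ on $\C[z,z^{-1}]$ with $Lx_n=\sum_m\Omega_{nm}x_m$, i.e. $L\bs x=\Omega\bs x$ (the sums are finite because $\Omega$ is banded). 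The heart of the argument is the intertwining identity
\[
 (\ad\,{\cal C})^n\Omega\cdot\bs x=(-1)^n\,T^nL\cdot\bs x,\qquad n\ge0,
\]
which I would prove by induction on $n$. The base case is $L\bs x=\Omega\bs x$. For the step, apply $\ad\,{\cal C}$ and rewrite $\cal C\Omega_n\bs x$ using that $\cal C$ commutes with the $z$-operator $T^nL$ and that $z\bs x={\cal C}\bs x$, while rewriting $\Omega_n{\cal C}\bs x=z\,\Omega_n\bs x$ via $M_z$; the two resulting terms recombine as $(-1)^n[M_z,T^nL]\bs x$, which is exactly $(-1)^{n+1}T^{n+1}L\cdot\bs x$. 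Crucially this identity uses only $L\bs x=\Omega\bs x$ and ${\cal C}\bs x=z\bs x$, so it is available in both directions.

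For (i)$\Rightarrow$(ii) I would take $L=D$ of order at most $r$. Each application of $T$ lowers the order of a differential operator by one, since $[M_z,(d/dz)^k]=-k\,(d/dz)^{k-1}$; hence $T^{r+1}D=0$ as an operator on $\C[z,z^{-1}]$. The intertwining identity then gives $(\ad\,{\cal C})^{r+1}\Omega\cdot\bs x=0$, i.e. the band matrix $(\ad\,{\cal C})^{r+1}\Omega$ annihilates $\bs x(z)$ for every $z$. Reading off the coefficients of the linearly independent $x_m$ forces $(\ad\,{\cal C})^{r+1}\Omega=0$; this is where Proposition~\ref{prop:ker} enters, guaranteeing that a band matrix killing every formal eigenvector $\bs x(z)$ must vanish.

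For (ii)$\Rightarrow$(i) I would run the same identity backwards: from $(\ad\,{\cal C})^{r+1}\Omega=0$ the intertwining identity gives $T^{r+1}L\cdot x_n=0$ for all $n$, and since the $x_n$ span $\C[z,z^{-1}]$ this means $T^{r+1}L=0$ as an operator. It then remains to prove the key lemma: a linear operator $L$ on $\C[z,z^{-1}]$ with $T^{r+1}L=0$ is a differential operator of order at most $r$ with Laurent-polynomial coefficients. Taking $D:=L$ would then furnish $D\bs x=\Omega\bs x$ and complete the converse.

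I expect this last lemma to be the main obstacle, since a priori $L$ is merely a linear endomorphism of an infinite-dimensional space, and I would establish it by induction on $r$. For $r=0$, $TL=0$ says $L$ commutes with every multiplication, so $Lf=f\,L(1)$ and $L=M_{L(1)}$ with $L(1)\in\C[z,z^{-1}]$. For the inductive step, $T(T^rL)=0$ gives by the $r=0$ case $T^rL=M_g$ with $g:=(T^rL)(1)\in\C[z,z^{-1}]$; since a direct computation yields $T^r\big(c\,(d/dz)^r\big)=(-1)^r r!\,M_c$, the choice $c:=\tfrac{(-1)^r}{r!}\,g\in\C[z,z^{-1}]$ makes $T^r\big(L-c\,(d/dz)^r\big)=0$, so by the induction hypothesis $L-c\,(d/dz)^r$ has order at most $r-1$, whence $L$ has order at most $r$. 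This peeling-off of the top-order coefficient, justified entirely by the action of $T$, is what converts the abstract ad-condition into a genuine differential operator with Laurent-polynomial coefficients.
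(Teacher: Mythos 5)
Your proof is correct, and it takes a genuinely different route from the paper's. The paper reads condition \emph{(i)} as $\Omega\bs x(z)\in\spn\{\bs x(z),\bs x'(z),\dots,\bs x^{(r)}(z)\}$ and leans on Proposition~\ref{prop:ker} (these derivatives form a basis of $\ker({\cal C}-zI)^{r+1}$, proved in the Appendix from the unitarity, factorization and band structure of CMV matrices) to rewrite \emph{(i)} as $({\cal C}-zI)^{r+1}\Omega\,\bs x(z)=0$; the expansions \eqref{eq:C-z power} and \eqref{eq:ad power} then convert this into $(\ad\,{\cal C})^{r+1}\Omega\,\bs x(z)=0$, and linear independence of the OLP gives \emph{(ii)}. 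You never form that kernel at all: your intertwining identity trades $(\ad\,{\cal C})$ acting on band matrices for $(\ad\,M_z)$ acting on endomorphisms of $\C[z,z^{-1}]$, and the paper's technical input is replaced by the purely algebraic lemma that $(\ad\,M_z)^{r+1}L=0$ forces $L$ to be a differential operator of order at most $r$ with Laurent polynomial coefficients --- a Grothendieck-style characterization of differential operators, whose base case works precisely because $M_z$ is invertible on $\C[z,z^{-1}]$, so commuting with $M_z$ alone implies commuting with every multiplication operator. Both the identity and the lemma are proved correctly (there is only a harmless sign slip in your inductive step: the recombined term is $(-1)^{n+1}[M_z,(\ad\,M_z)^nL]\,\bs x$, which is consistent with the identity as you stated it). One correction of attribution: the step where you cite Proposition~\ref{prop:ker} --- that a band matrix annihilating $\bs x(z)$ for every $z\ne0$ must vanish --- needs only the linear independence of the OLP, applied row by row; so your argument is in fact completely independent of the Appendix. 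That is what your approach buys: a self-contained, elementary proof that applies verbatim to any banded matrix admitting $\bs x$ as formal eigenvector over a basis of $\C[z,z^{-1}]$, with no appeal to unitarity or the CMV factorization. What the paper's route buys is economy at the level of the whole article: Proposition~\ref{prop:ker} is needed anyway (for instance in the centralizer computation of Proposition~\ref{prop:Z}), so once it is established, Theorem~\ref{thm:ad-CMV} comes essentially for free.
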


\begin{proof}
Condition {\it (i)} reads as $\Omega \bs x(z) \in \spn\{\bs x(z),\bs x'(z),\dots,\bs x^{(r)}(z)\}$, $z\in\C\setminus\{0\}$. In view of Proposition~\ref{prop:ker} in the Appendix, this is equivalent to $\Omega \bs x(z) \in \ker({\cal C}-zI)^{r+1}$, i.e. $({\cal C}-zI)^{r+1} \Omega \bs x(z) = 0$. Using the expansion 
\begin{equation} \label{eq:C-z power}
 ({\cal C}-zI)^n = \sum_{k=0}^n (-1)^k {n \choose k} z^k \, {\cal C}^{n-k},
\end{equation} 
together with \eqref{eq:CX} and \eqref{eq:ad power}, we find that
$$
\begin{aligned}
 ({\cal C}-zI)^{r+1} \Omega \bs x(z) = 0 
 & \; \Leftrightarrow \;
 \sum_{k=0}^{r+1} (-1)^k {r+1 \choose k} 
 {\cal C}^{r+1-k} \Omega \, {\cal C}^k \bs x(z) = 0
 \\
 & \; \Leftrightarrow \;
 (\ad\,{\cal C})^{r+1} \Omega \bs x(z) = 0.
\end{aligned}
$$
Due to the linear independence of the OLP, the last condition is equivalent to {\it (ii)}.
\end{proof}

The fact that the ad-conditions involve no information about the explicit form of the operator $D$, apart from its order, makes the previous characterization particularly useful for discovering bispectral situations. 

The linear differential operator $D$ involved in Theorem~\ref{thm:ad-CMV} has the freedom of a constant factor and an additive constant, which corresponds to the freedom of the band matrix $\Omega$ in a numerical factor and the addition of a multiple of the identity. Among the solutions $\Omega$ of the ad-conditions we must discard the multiples of the identity as trivial solutions corresponding to differential operators of order zero. In what follows, we will use the expression `linear differential operators' to refer only to those of order greater than zero, i.e. with the form \eqref{eq:D} and $D_r(z)\ne0$ for $r\ge1$.

The operator $(\ad\,{\cal C})$ has a symmetry, inherited from the unitarity of the CMV matrix $\cal C$, which will be further exploited in the next section.

\begin{prop} \label{prop:ad+}
For any CMV matrix $\cal C$ and any band matrix $\Omega$,
$$
 (\ad\,{\cal C})^n \Omega^\dag = 
 {\cal C}^n ((\ad\,{\cal C})^n\Omega)^\dag \, {\cal C}^n.
$$
\end{prop}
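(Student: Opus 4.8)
The plan is to prove the identity directly from the explicit expansion of $(\ad\,{\cal C})^n$ in \eqref{eq:ad power}, exploiting that the CMV matrix $\cal C$ is unitary. First I would write out
$$
 (\ad\,{\cal C})^n \Omega = \sum_{k=0}^n (-1)^k {n \choose k} {\cal C}^{n-k} \Omega \, {\cal C}^k
$$
and take the Hermitian adjoint of both sides. Since the adjoint reverses the order of a product and the scalar coefficients $(-1)^k {n \choose k}$ are real, this gives
$$
 ((\ad\,{\cal C})^n \Omega)^\dag = \sum_{k=0}^n (-1)^k {n \choose k} ({\cal C}^\dag)^k \, \Omega^\dag \, ({\cal C}^\dag)^{n-k}.
$$

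Second, I would invoke the unitarity of the CMV matrix, ${\cal C}^\dag = {\cal C}^{-1}$, to replace each $({\cal C}^\dag)^j$ by ${\cal C}^{-j}$, turning the summand into ${\cal C}^{-k}\,\Omega^\dag\,{\cal C}^{-(n-k)} = {\cal C}^{-k}\,\Omega^\dag\,{\cal C}^{k-n}$. Conjugating the whole expression by ${\cal C}^n$ then reabsorbs the negative powers: the left factor becomes ${\cal C}^n {\cal C}^{-k} = {\cal C}^{n-k}$ and the right factor becomes ${\cal C}^{k-n}{\cal C}^n = {\cal C}^k$, so that
$$
 {\cal C}^n ((\ad\,{\cal C})^n \Omega)^\dag \, {\cal C}^n = \sum_{k=0}^n (-1)^k {n \choose k} {\cal C}^{n-k}\,\Omega^\dag\,{\cal C}^k.
$$
Comparing with \eqref{eq:ad power} applied to $\Omega^\dag$, the right-hand side is precisely $(\ad\,{\cal C})^n \Omega^\dag$, which is the claimed identity.

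There is essentially no hard step here; the only thing to check carefully is the index bookkeeping when passing to the adjoint and conjugating, namely that the summation index $k$ is preserved term by term and that the powers recombine exactly as above. One should also note in passing that all matrices involved remain banded (the adjoint of a band matrix is banded, and commutation with the banded matrix $\cal C$ preserves bandedness), so each product is a well-defined locally finite sum and the manipulations are legitimate. An alternative route would be induction on $n$, starting from the trivial base case $n=0$ and passing from $n$ to $n+1$ by conjugation with $\cal C$; but the direct computation above is shorter and avoids managing the commutator recursion.
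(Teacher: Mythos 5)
Your proof is correct, and it takes a genuinely different route from the paper's. You compute directly from the closed-form expansion \eqref{eq:ad power}: take adjoints term by term, use unitarity to rewrite each $({\cal C}^\dag)^j$ as ${\cal C}^{-j}$, and conjugate by ${\cal C}^n$ so that the powers recombine into the expansion of $(\ad\,{\cal C})^n\Omega^\dag$; the index bookkeeping you flag is indeed the only thing to check, and it works. The paper instead argues by induction on $n$ --- precisely the alternative you mention in passing --- proving the case $n=1$ from unitarity, ${\cal C}({\cal C}\Omega-\Omega{\cal C})^\dag{\cal C}={\cal C}\Omega^\dag-\Omega^\dag{\cal C}$, and then handling the inductive step via the fact that left and right multiplication by ${\cal C}$ commute with the operator $(\ad\,{\cal C})$, so that ${\cal C}\,\bigl((\ad\,{\cal C})^n X\bigr)\,{\cal C}=(\ad\,{\cal C})^n({\cal C}X{\cal C})$. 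Both arguments hinge on the same single ingredient, the unitarity of ${\cal C}$. Yours is shorter and more transparent given that \eqref{eq:ad power} is already established in the paper; the paper's induction never needs the binomial expansion and stays entirely at the level of commutators, which keeps it self-contained in settings where such an explicit formula is not at hand. Your side remark that all matrices involved are banded (in particular ${\cal C}^{-1}={\cal C}^\dag$ is banded), so every product is a locally finite sum and the manipulations are legitimate, makes explicit a point the paper leaves implicit.
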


\begin{proof}
For $n=1$,
$$
 {\cal C}((\ad\,{\cal C})\Omega)^\dag{\cal C} =
 {\cal C}({\cal C}\Omega-\Omega{\cal C})^\dag{\cal C} =
 {\cal C}\Omega^\dag-\Omega^\dag{\cal C} =
 (\ad\,{\cal C})\Omega^\dag.
$$
Assuming the identity for an index $n$,
$$
\begin{aligned}
 {\cal C}^{n+1} ((\ad\,{\cal C})^{n+1}\Omega)^\dag \, {\cal C}^{n+1} 
 & = {\cal C} (\ad\,{\cal C})^n (((\ad\,{\cal C})\Omega)^\dag) \, {\cal C}
 \\
 & = (\ad\,{\cal C})^n ({\cal C}((\ad\,{\cal C})\Omega)^\dag{\cal C}) 
 = (\ad\,{\cal C})^{n+1}\Omega^\dag.
\end{aligned}
$$
\end{proof}

As a consequence of the previous result,
\begin{equation} \label{eq:H}
 (\ad\,{\cal C})^n\Omega = 0
 \;\Leftrightarrow\;
 (\ad\,{\cal C})^n\Omega^\dag = 0
 \;\Leftrightarrow\;
 \begin{cases}
  (\ad\,{\cal C})^n\re\,\Omega = 0, 
  & \re\,\Omega = \frac{1}{2}(\Omega+\Omega^\dag),
  \\
  (\ad\,{\cal C})^n\im\,\Omega = 0,
  & \ts \im\,\Omega = \frac{1}{2i}(\Omega-\Omega^\dag).
 \end{cases}
\end{equation}
This means that the solutions of the CMV ad-conditions can be chosen Hermitian without loss, since any solution $\Omega$ splits into Hermitian ones, $\re\,\Omega$ and $\im\,\Omega$. Therefore, in the CMV bispectral problem we can assume that the eigenvalues of the linear differential operator are real.

In view of Theorem~\ref{thm:ad-CMV} and the previous comments, the CMV bispectral problem can be reduced to the search for CMV matrices $\cal C$ with non-trivial real diagonal solutions $\Lambda$ of the CMV ad-conditions $(\ad\,{\cal C})^n\Lambda=0$ for some $n\ge2$.

\section{The Hermitian ad-conditions} \label{sec:herm-ad}

The CMV ad-conditions $(\ad\,{\cal C})^n\Lambda=0$ are too difficult to solve the problem directly in this way, but they can be rewriten in a more manageable form.

The matrix $(\ad\,{\cal C})^n\Lambda$ is $(4n+1)$-diagonal for any diagonal $\Lambda$, thus $(\ad\,{\cal C})^n\Lambda=0$ gives $4n+1$ difference equations, one for each diagonal. These difference equations are not all independent, so it should be possible to reorganize these ad-conditions in a smarter way. For this purpose we will introduce a narrower CMV ad-operator $(\ad_n\,{\cal C})$ which preserves hermiticity and such that $(\ad\,{\cal C})^n\Lambda=0$ iff $(\ad_n\,{\cal C})\Lambda=0$. This will reduce the number of difference equations. The key result is Proposition~\ref{prop:ad+} which shows that, when $\Lambda$ is real, $(\ad\,{\cal C})^n\Lambda={\cal C}^n((\ad\,{\cal C})^n\Lambda)^\dag\,{\cal C}^n$. Hence, we can get an Hermitian matrix by multiplying $(\ad\,{\cal C})^n\Lambda$ on the left and the right by ``half" of the matrix factors in $({\cal C}^\dag)^n$. This suggests the following definition.

\begin{defn} \label{def:H-ad} 
For any CMV matrix ${\cal C}={\cal L}{\cal M}$ and any band matrix $\Omega$ we define
$$
 (\ad_n\,{\cal C})\Omega :=
 \begin{cases}
	({\cal C}^\dag)^m 
	((\ad\,{\cal C})^n\Omega) 
	({\cal C}^\dag)^m, 
	& \quad n=2m,
	\\
	{\cal L}^\dag ({\cal C}^\dag)^m 
	((\ad\,{\cal C})^n\Omega) 
	({\cal C}^\dag)^m {\cal M}^\dag, 
	& \quad n=2m+1.
\end{cases}
$$
\end{defn}

Due to the unitarity of $\cal L$ and $\cal M$, 
\begin{equation} \label{eq:h-red}
 (\ad\,{\cal C})^n\Omega=0
 \; \Leftrightarrow \;
 (\ad_n\,{\cal C})\Omega=0.
\end{equation}
Therefore, Theorem~\ref{thm:ad-CMV} can be restated in the following way: given a sequence of OLP $x_n$ on the unit circle with CMV matrix $\cal C$, for any band matrix $\Omega$ the Hermitian ad-conditons $(\ad_{r+1}\,{\cal C})\Omega=0$ characterize the existence of a linear differential operator $D$ of order at most $r$ such that $D \bs x = \Omega \bs x$.  

Also, Proposition~\ref{prop:ad+} implies that 
\begin{equation} \label{eq:H-Had}
 (\ad_n\,{\cal C}) \Omega^\dag = ((\ad_n\,{\cal C}) \Omega)^\dag,
\end{equation}
so that $(\ad_n\,{\cal C})\Omega$ is Hermitian whenever $\Omega^\dag=\Omega$, a requirement that we can assume without loss. 

From the definition of $(\ad_n\,{\cal C})\Omega$ we obtain directly the recursion
\begin{equation} \label{eq:REC-Had}
 \begin{aligned}
  & (\ad_{n+1}\,{\cal C})\Omega =
  \begin{cases}
	{\cal M}((\ad_n\,{\cal C})\Omega){\cal M}^\dag
	- {\cal L}^\dag((\ad_n\,{\cal C})\Omega){\cal L}, 
	& \text{ even } n,
	\\
	{\cal L}((\ad_n\,{\cal C})\Omega){\cal L}^\dag
	- {\cal M}^\dag((\ad_n\,{\cal C})\Omega){\cal M}, 
	& \text{ odd } n,
  \end{cases}
  \\
  & (\ad_0\,{\cal C})\Omega = \Omega,
 \end{aligned}
\end{equation}
which allows us to find easily the explicit form of $(\ad_n\,{\cal C})\Omega$ for small values of $n$, 
{\footnotesize
$$
\kern-2pt
\begin{aligned}
 (\ad_1\,{\cal C})\Omega 
 & = {\cal M}\Omega{\cal M}^\dag - {\cal L}^\dag\Omega{\cal L}, 
 \\
 (\ad_2\,{\cal C})\Omega 
 & = {\cal L}{\cal M}\Omega{\cal M}^\dag{\cal L}^\dag 
 - 2\Omega
 + {\cal M}^\dag{\cal L}^\dag\Omega{\cal L}{\cal M},
 \\
 (\ad_3\,{\cal C})\Omega 
 & = {\cal M}{\cal L}{\cal M}\Omega{\cal M}^\dag{\cal L}^\dag{\cal M}^\dag
 - 3{\cal M}\Omega{\cal M}^\dag + 3{\cal L}^\dag\Omega{\cal L}
 - {\cal L}^\dag{\cal M}^\dag{\cal L}^\dag\Omega{\cal L}{\cal M}{\cal L},
 \\
 (\ad_4\,{\cal C})\Omega
 & = 
 {\cal L}{\cal M}{\cal L}{\cal M}
 \Omega
 {\cal M}^\dag{\cal L}^\dag{\cal M}^\dag{\cal L}^\dag
 - 4{\cal L}{\cal M}\Omega{\cal M}^\dag{\cal L}^\dag 
 + 6\Omega
 - 4{\cal M}^\dag{\cal L}^\dag\Omega{\cal L}{\cal M}
 + {\cal M}^\dag{\cal L}^\dag{\cal M}^\dag{\cal L}^\dag
 \Omega
 {\cal L}{\cal M}{\cal L}{\cal M}.
\end{aligned}
$$
}For an arbitrary value of $n$, using the expansion \eqref{eq:ad power} we obtain 
$$
\begin{aligned}
 (\ad_n\,{\cal C})\Omega = 
 & \, \big(\stackrel{r)}{\cdots}{\cal M}{\cal L}{\cal M}\big)
 \Omega
 \big({\cal M}^\dag{\cal L}^\dag{\cal M}^\dag\stackrel{r)}{\cdots}\big)
 + (-1)^r
 \big(\stackrel{r)}{\cdots}{\cal L}^\dag{\cal M}^\dag{\cal L}^\dag\big)
 \Omega
 \big({\cal L}{\cal M}{\cal L}\stackrel{r)}{\cdots}\big)
 \\
 & + \text{ narrower band matrices. }
\end{aligned}
$$

Therefore, when $\Lambda$ is real diagonal, bearing in mind that ${\cal M}\Lambda{\cal M}^\dag$ and ${\cal L}^\dag\Lambda{\cal L}$ are tridiagonal, we find that $(\ad_n\,{\cal C})\Lambda$ is a $(4n-1)$-diagonal Hermitian matrix, so the ad-conditions $(\ad_n\,{\cal C})\Lambda=0$ only lead to $2n$ difference equations, corresponding to the main and upper diagonals. We can write explicitly the equations of $(\ad_n\,{\cal C})\Lambda=0$ for the first values of $n$. We will order the equations running from the top upper diagonal ($2n^{\rm th}$ diagonal) to the main one ($1^{\rm st}$ diagonal), using the previous equations to simplify the new ones and omitting them when they yield no independent equation. Proceeding in this way we obtain the following results for $n=2,3$:

\bigskip

\renewcommand{\arraystretch}{1.25}

{\scriptsize 

\hspace{-19pt}
\begin{tabular}{|l|l|}
 \hline
 \multicolumn{2}{|c|}{$(\ad_2\,{\cal C})\Lambda=0$}
 \\ \hline
 $4^{\rm th}$ diagonal 
 & $(\lambda_{k+1}-\lambda_k)\alpha_k=0, \kern7pt k\geq1$
 \\ \hline
 $3^{\rm rd}$ diagonal 
 & $(\lambda_1-\lambda_0)\alpha_0=0$
 \\ \hline
 $2^{\rm nd}$ diagonal 
 & $(\lambda_2-\lambda_0)\alpha_0=0$
 \\
 & $(\lambda_{k+2}-\lambda_{k-1})\alpha_k=0, \kern7pt k\geq1$
 \\ \hline
 $1^{\rm st}$ diagonal 
 & $\lambda_2-2\lambda_0+\lambda_1=0$
 \\
 & $\lambda_3-2\lambda_1+\lambda_0=0$
 \\
 & $\lambda_{k+4}-2\lambda_{k+2}+\lambda_k=0, \kern7pt k\geq0$
\\ \hline
\end{tabular}
\hspace{3pt}
\begin{tabular}{|l|l|}
 \hline
 \multicolumn{2}{|c|}{$(\ad_3\,{\cal C})\Lambda=0$}
 \\ \hline
 $6^{\rm th}$ diagonal 
 & $(\lambda_{k+1}-\lambda_k)\alpha_k=0, \kern7pt k\geq2$
 \\ \hline
 $5^{\rm th}$ diagonal 
 & $(\lambda_2-\lambda_1)\alpha_1=0$
 \\ \hline
 $4^{\rm th}$ diagonal 
 & $(\lambda_3-\lambda_0)\alpha_1
 = (\lambda_1-\lambda_0)\alpha_0(\overline\alpha_0\alpha_1-\alpha_0)$
 \\
 & $(\lambda_{k+2}-\lambda_{k-1})\alpha_k=0, \kern7pt k\geq2$
 \\ \hline
 $3^{\rm rd}$ diagonal 
 & $(\lambda_2-\lambda_0)\alpha_0=0,$
 \kern7pt
 $(\lambda_1-\lambda_0)\alpha_0=0$
 \\ \hline
 $2^{\rm nd}$ diagonal 
 & $(\lambda_3-\lambda_0)\alpha_0=0,$
 \kern7pt
 $(\lambda_4-\lambda_0)\alpha_1=0$
 \\
 & $(\lambda_{k+3}-\lambda_{k-2})\alpha_k=0, \kern7pt k\geq2$
 \\ \hline
 $1^{\rm st}$ diagonal 
 & $\lambda_3-3\lambda_1+3\lambda_0-\lambda_2=0$
 \\
 & $\lambda_4-3\lambda_2+3\lambda_0-\lambda_1=0$
 \\
 & $\lambda_5-3\lambda_3+3\lambda_1-\lambda_0=0$
 \\
 & $\lambda_{k+6}-3\lambda_{k+4}+3\lambda_{k+2}-\lambda_k=0, \kern7pt k\geq0$
 \\ \hline
\end{tabular}

}

\bigskip

\noindent We can reorganize the above equations in a more natural way. In the following tables the equations with the same shape are grouped in the same column, except for the equations in red at the top of some columns, which are slightly different.

\bigskip

{\scriptsize

\bigskip

\begin{tabular}{|c|c|c|}
 \hline
 \multicolumn{3}{|c|}{$(\ad_2\,{\cal C})\Lambda=0$}
 \\ \hline
 Eq$_1$ & Eq$_2$ & RR
 \\ \hline
 $(\lambda_1-\lambda_0)\alpha_0=0$ 
 & {\color{red} $(\lambda_2-\lambda_0)\alpha_0=0$} 
 & {\color{red} $\lambda_2-2\lambda_0+\lambda_1=0$}
 \\ 
 $(\lambda_2-\lambda_1)\alpha_1=0$ 
 & $(\lambda_3-\lambda_0)\alpha_1=0$ 
 & {\color{red} $\lambda_3-2\lambda_1+\lambda_0=0$}
 \\
 $(\lambda_3-\lambda_2)\alpha_2=0$ 
 & $(\lambda_4-\lambda_1)\alpha_2=0$ 
 & $\lambda_4-2\lambda_2+\lambda_0=0$
 \\
 $(\lambda_4-\lambda_3)\alpha_3=0$ 
 & $(\lambda_5-\lambda_2)\alpha_3=0$ 
 & $\lambda_5-2\lambda_3+\lambda_1=0$
 \\[-3pt]
 $\vdots$ & $\vdots$ & $\vdots$
 \\ \hline
\end{tabular}

\bigskip

\begin{tabular}{|c|c|c|c|}
 \hline
 \multicolumn{4}{|c|}{$(\ad_3\,{\cal C})\Lambda=0$}
 \\ \hline
 Eq$_1$ & Eq$_2$ & Eq$_3$ & RR
 \\ \hline
 $(\lambda_1-\lambda_0)\alpha_0=0$ 
 & {\color{red} $(\lambda_2-\lambda_0)\alpha_0=0$} 
 & {\color{red} $(\lambda_3-\lambda_0)\alpha_0=0$} 
 & {\color{red} $\lambda_3-3\lambda_1+3\lambda_0-\lambda_2=0$}
 \\
 $(\lambda_2-\lambda_1)\alpha_1=0$ 
 & $(\lambda_3-\lambda_0)\alpha_1=0$ 
 & {\color{red} $(\lambda_4-\lambda_0)\alpha_1=0$} 
 & {\color{red} $\lambda_4-3\lambda_2+3\lambda_0-\lambda_1=0$}
 \\ 
 $(\lambda_3-\lambda_2)\alpha_2=0$ 
 & $(\lambda_4-\lambda_1)\alpha_2=0$ 
 & $(\lambda_5-\lambda_0)\alpha_2=0$ 
 & {\color{red} $\lambda_5-3\lambda_3+3\lambda_1-\lambda_0=0$}
 \\
 $(\lambda_4-\lambda_3)\alpha_3=0$ 
 & $(\lambda_5-\lambda_2)\alpha_3=0$ 
 & $(\lambda_6-\lambda_1)\alpha_3=0$ 
 & $\lambda_6-3\lambda_4+3\lambda_2-\lambda_0=0$
 \\
 $(\lambda_5-\lambda_4)\alpha_4=0$ 
 & $(\lambda_6-\lambda_3)\alpha_4=0$ 
 & $(\lambda_7-\lambda_2)\alpha_4=0$ 
 & $\lambda_7-3\lambda_5+3\lambda_3-\lambda_1=0$
 \\[-3pt]
 $\vdots$ & $\vdots$ & $\vdots$ & $\vdots$
 \\ \hline
\end{tabular}

\bigskip

}

Let us use the previous results to find, for instance, the CMV matrices $\cal C$ with non-trivial real diagonal solutions $\Lambda$ for $(\ad_2\,{\cal C})\Lambda=0$. All but the first two entries in the column RR of the corresponding table yield the recurrence relation 
$$
 \lambda_{k+4} - 2\lambda_{k+2} + \lambda_k = 0, 
 \qquad k\ge0,
$$
whose general solution is
$$
 \lambda_k = a_0 + a_1k + (-1)^k (b_0 + b_1k),
 \qquad a_i,b_i\in\R.
$$
Imposing the remaining two conditions of RR, 
$$
 \lambda_2-2\lambda_0+\lambda_1=0, \qquad \lambda_3-2\lambda_1+\lambda_0=0,
$$
yields $a_1=0$ and $b_1=2b_0$, i.e.
\begin{equation} \label{eq:RR-ad2}
 \lambda_k = a_0 + b_0 (-1)^k (1+2k).
\end{equation}
Then, if $\alpha_j\ne0$ for some index $j$, the equation $(\lambda_{j+1}-\lambda_j)\alpha_j=0$ of the column Eq$_1$ implies that 
$$
 0 = \lambda_{j+1}-\lambda_j = 4b_0 (-1)^{j+1} (1+j),
$$
so that $b_0=0$ and $\Lambda$ is a multiple of the identity. 

Therefore, the only non-trivial solutions may appear when $\alpha_k=0$ for all $k$. In this case the equations of the columns Eq$_1$ and Eq$_2$ are automatically satisfied and the general solution of $(\ad_2\,{\cal C})\Lambda=0$ is given by \eqref{eq:RR-ad2}, i.e. 
$$
 \lambda_k = \lambda_0 + (\lambda_0-\lambda_1) \frac{(-1)^k(1+2k)-1}{4},
 \qquad \lambda_0,\lambda_1\in\R.
$$
In other words,
\begin{equation} \label{eq:sol-Leb}
 \Lambda = 
 \left(
 \begin{smallmatrix}
 	0
 	\\[1pt]
 	& \kern-1pt -1
 	\\[1pt]
 	& & \kern4pt 1
 	\\[1pt]
 	& & & \kern-1pt -2
 	\\[1pt]
 	& & & & \kern4pt 2
 	\\[-4pt]
 	& & & & & \ddots
 \end{smallmatrix}
 \right)
\end{equation}
up to numerical factors and addition of multiples of the identity. This solution corresponds to the OLP
$$
 x_{2m-1}(z)=z^{-m}, \qquad x_{2m}(z)=z^m,
$$
associated with the Lebesgue measure on the unit circle, which satisfy $D \bs x = \Lambda \bs x$ for the first order linear differential operator
$$
 D = z\frac{d}{dz}.
$$

Using the results of the table for $(\ad_3\,{\cal C})\Lambda=0$ we find that a similar analysis works for these ad-conditions. Concerning the column RR, the first three equations impose on the general solution of the remaining equations 
$$
 \lambda_{k+6}-3\lambda_{k+4}+3\lambda_{k+2}-\lambda_k=0,
 \qquad k\ge0,
$$
given by
$$
 \lambda_k = a_0 + a_1k + a_2k^2 + (-1)^k (b_0 + b_1k + b_2k^2),
 \qquad a_i,b_i\in\R,
$$
the constraints $a_2=a_1$, $b_1=2b_0$ and $b_2=0$. If $\alpha_j\ne0$ for some $j$, using Eq$_1$ and Eq$_2$ we find again that $\Lambda$ is a multiple of the identity. This leaves as the only non-trivial solution that one related to the Lebesgue measure as in the previous case.
    
As a consequence we have the following version of Bochner theorem for OLP on the unit circle.

\begin{thm} \label{thm:ad3}
The only OLP on the unit circle which are eigenfunctions of a linear differential operator of order not greater than two are those orthonormal with respect to the Lebesgue measure.
\end{thm}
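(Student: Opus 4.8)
The plan is to reduce the statement to a single Hermitian ad-condition and read off its solutions from the $n=3$ table. By Theorem~\ref{thm:ad-CMV} with $r=2$, the OLP $x_n$ are eigenfunctions of a linear differential operator of order at most two precisely when $(\ad\,{\cal C})^3\Lambda=0$ for some diagonal $\Lambda$, which by \eqref{eq:H} may be taken real, and by \eqref{eq:h-red} this is equivalent to the Hermitian ad-condition $(\ad_3\,{\cal C})\Lambda=0$. Since every solution of $(\ad_2\,{\cal C})\Lambda=0$ (the order-$\le1$ case) is in particular a solution for $n=3$, it suffices to analyse the system recorded in the $n=3$ table.

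First I would solve the column RR. Its generic rows give the recurrence $\lambda_{k+6}-3\lambda_{k+4}+3\lambda_{k+2}-\lambda_k=0$, whose characteristic polynomial factors as $(t^2-1)^3$; hence $\lambda_k=a_0+a_1k+a_2k^2+(-1)^k(b_0+b_1k+b_2k^2)$ with $a_i,b_i\in\R$. The three exceptional (red) rows at the top of RR then pin the parameters down to $a_2=a_1$, $b_1=2b_0$ and $b_2=0$, leaving $\lambda_k=a_0+a_1k(k+1)+b_0(-1)^k(1+2k)$ as a three-parameter family.

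Next I would show that a single nonvanishing Verblunsky coefficient already forces triviality. If $\alpha_j\ne0$ for some $j$, the entries of columns Eq$_1$ and Eq$_2$ indexed by $\alpha_j$ yield $\lambda_{j+1}=\lambda_j$ and $\lambda_{j+2}=\lambda_{j-1}$ (the latter replaced by the red relation $\lambda_2=\lambda_0$ when $j=0$). Substituting the three-parameter form turns these into the homogeneous pair $a_1-2(-1)^jb_0=0$ and $3a_1+2(-1)^jb_0=0$, whose coefficient determinant equals $8(-1)^j\ne0$; thus $a_1=b_0=0$ and $\Lambda=a_0I$ is a multiple of the identity, hence trivial. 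Consequently a nontrivial $\Lambda$ demands $\alpha_k=0$ for all $k$, i.e. the Lebesgue measure, whose OLP $x_{2m-1}=z^{-m}$ and $x_{2m}=z^m$ are eigenfunctions of $z\frac{d}{dz}$, an operator of order one; this establishes the theorem.

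I expect the only real work to lie in the index bookkeeping rather than in any conceptual step: one must check that the small-$k$ equations in the red cells are consistent with the generic solution of the recurrence, and that for every admissible $j$—including the degenerate value $j=0$ where Eq$_2$ reduces to $\lambda_2=\lambda_0$—the two relations drawn from Eq$_1$ and Eq$_2$ stay independent, so that each nonzero $\alpha_j$ collapses $\Lambda$ to a scalar. Once this case analysis is laid out uniformly the proof closes, and the same computation applied to the $n=2$ table confirms that nothing new arises for first-order operators.
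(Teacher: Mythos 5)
Your proposal is correct and follows essentially the same route as the paper: reduce to the Hermitian ad-condition $(\ad_3\,{\cal C})\Lambda=0$, solve the column RR to get the three-parameter family $\lambda_k=a_0+a_1k(k+1)+b_0(-1)^k(1+2k)$, and then use Eq$_1$ and Eq$_2$ to show any nonzero Verblunsky coefficient forces $\Lambda\propto I$, leaving only the Lebesgue case. The only difference is that you make explicit the linear-algebra step (the $2\times2$ system with determinant $8(-1)^j\ne0$, including the degenerate index $j=0$) that the paper leaves as ``we find again that $\Lambda$ is a multiple of the identity,'' which is a welcome filling-in of detail rather than a different argument.
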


We have seen that the simplicity of the Hermitian ad-conditions is enough to deal with the CMV bispectral problem for linear differential operators of lower degree just by brute force. However, to go beyond this we need to further develop the machinery of the CMV ad-conditions.

\section{The CMV ad-conditions: ad-integration} \label{sec:ad-I}

The CMV ad-conditions involve commutators with a CMV matrix. Hence, the study of the centralizer of a CMV matrix can help us to find a short-cut to
the solution of such ad-conditions.

\begin{defn} \label{def:Z}
We denote by ${\cal Z}({\cal C})$ the centralizer of the CMV matrix ${\cal C}$ in the multiplicative group of infinite band matrices, i.e. 
$$
 {\cal Z}({\cal C}) := 
 \{\Omega \text{ band matrix} : [{\cal C},\Omega]=0\}.
$$
We can write ${\cal Z}({\cal C}) = \cup_{n\ge0} {\cal Z}_n({\cal C})$, where
$$
 {\cal Z}_n({\cal C}) := 
 \{\Omega \; (2n+1)\text{-diagonal matrix} : [{\cal C},\Omega]=0\}.
$$
\end{defn}

The centralizer of a CMV matrix among banded matrices can be explicitly determined. Concerning the result below, keep in mind that, due to the unitarity of a CMV matrix ${\cal C}$, its inverse ${\cal C}^{-1}={\cal C}^\dag$ is also banded.

\begin{prop} \label{prop:Z}
For any CMV matrix $\cal C$,
$$
\begin{aligned}
 & {\cal Z}({\cal C}) = \{f({\cal C}):f\in\C[z,z^{-1}]\},
 \\
 & {\cal Z}_{2m}({\cal C}) = {\cal Z}_{2m+1}({\cal C}) = 
 \{f({\cal C}):f\in\spn\{1,z^{-1},z,z^{-2},z^2,\dots,z^{-m},z^m\}\}.
\end{aligned}
$$
\end{prop}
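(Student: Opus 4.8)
The plan is to prove both inclusions, the hard direction being that every band matrix commuting with $\cal C$ is a Laurent polynomial in $\cal C$. The easy inclusion is immediate: for any $f\in\C[z,z^{-1}]$, the matrix $f({\cal C})$ is a finite linear combination of powers ${\cal C}^k$ and $({\cal C}^\dag)^k={\cal C}^{-k}$, each of which is banded (by unitarity $\cal C$ and $\cal C^\dag$ are both five-diagonal, so ${\cal C}^k$ is $(4k+1)$-diagonal), and clearly every such $f({\cal C})$ commutes with $\cal C$. This already shows $\{f({\cal C})\}\subseteq{\cal Z}({\cal C})$, and tracking bandwidths shows that $\spn\{1,z^{-1},z,\dots,z^{-m},z^m\}$ evaluated at $\cal C$ lands inside ${\cal Z}_{2m}({\cal C})$.

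For the reverse inclusion I would work diagonal by diagonal using the eigenvector relation. Suppose $\Omega$ is a $(2n+1)$-diagonal band matrix with $[{\cal C},\Omega]=0$. Then $\Omega$ commutes with $\cal C$, hence preserves each eigenspace: from ${\cal C}\bs x(z)=z\bs x(z)$ (the relation \eqref{eq:CX}) one gets ${\cal C}(\Omega\bs x(z))=\Omega{\cal C}\bs x(z)=z(\Omega\bs x(z))$, so $\Omega\bs x(z)$ is again a formal eigenvector of $\cal C$ with eigenvalue $z$. By Proposition~\ref{prop:ker} in the Appendix, the formal eigenvectors of $\cal C$ with eigenvalue $z$ are spanned by $\bs x(z)$ alone (the case $r=0$ of the kernel statement used in Theorem~\ref{thm:ad-CMV}), so there is a scalar $g(z)$ with $\Omega\bs x(z)=g(z)\bs x(z)$ for all $z\in\C\setminus\{0\}$. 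The first step is therefore to identify $g$: since $\Omega$ is banded its entries act on the Laurent-polynomial components $x_k(z)$ by finite linear combinations, and comparing the top component gives $g(z)=\sum_{j}\Omega_{0j}x_j(z)/x_0(z)$, which one checks is itself a Laurent polynomial because $\Omega$ has finite bandwidth and $x_0$ is a nonzero constant. Controlling the degree of $g$ is the crucial quantitative point: a $(2n+1)$-diagonal $\Omega$ forces $g\in\spn\{z^{-m},\dots,z^m\}$ with $m=n$, matching the bandwidth bookkeeping of the forward inclusion.

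Once $g$ is pinned down, I would show $\Omega=g({\cal C})$. Both sides are band matrices, and applying $g({\cal C})$ to $\bs x(z)$ gives $g({\cal C})\bs x(z)=g(z)\bs x(z)=\Omega\bs x(z)$, so the band matrix $\Omega-g({\cal C})$ annihilates every formal eigenvector $\bs x(z)$. The final step is to conclude that a band matrix killing all the $\bs x(z)$ must vanish: since $\{x_k\}_{k\ge0}$ are linearly independent OLP spanning $\C[z,z^{-1}]$, and the matrix–vector relation $(\Omega-g({\cal C}))\bs x(z)=0$ is an identity of Laurent polynomials in $z$ for each row, each row of $\Omega-g({\cal C})$ must be the zero functional on the spanning set $\{x_k\}$, hence zero. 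This gives $\Omega=g({\cal C})$ with $g$ a Laurent polynomial of the degree dictated by the bandwidth, establishing both displayed equalities.

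The main obstacle I anticipate is the degree/bandwidth bookkeeping that yields the sharp identification ${\cal Z}_{2m}={\cal Z}_{2m+1}$: one must verify both that $(2n+1)$-diagonality of $\Omega$ forces $\deg g\le n$ in the symmetric Laurent sense and, conversely, that $f({\cal C})$ for $f\in\spn\{1,z^{-1},z,\dots,z^{-m},z^m\}$ is exactly $(4m+1)$-diagonal, so that the even and odd cases collapse to the same span. The subtlety is that a single extra power $z^{m+1}$ or $z^{-m-1}$ would widen the band by four rather than two, which is what produces the coincidence of ${\cal Z}_{2m}$ and ${\cal Z}_{2m+1}$; making this precise requires carefully reading off the outermost nonzero diagonals of ${\cal C}^k$ from the five-diagonal structure in \eqref{eq:CMV}, presumably using the $\cal L$, $\cal M$ factorization \eqref{eq:LM-shift} to track how powers spread the support.
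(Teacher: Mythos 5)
Your overall architecture coincides with the paper's own proof: use Proposition~\ref{prop:ker} to conclude $\Omega\bs x(z)=g(z)\bs x(z)$, identify $g$ from the top row as $g=\sum_k\Omega_{0,k}x_k$, and deduce $\Omega=g({\cal C})$ from the linear independence of the OLP. Those steps are sound. The genuine gap is in the step you yourself single out as crucial, the degree bookkeeping, and as stated it is incorrect: you claim that a $(2n+1)$-diagonal $\Omega$ forces $g\in\spn\{z^{-m},\dots,z^m\}$ with $m=n$, and that this ``matches'' the forward inclusion. It does not match: in the forward direction a symmetric Laurent polynomial of degree $m$ produces a $(4m+1)$-diagonal matrix, so the matching backward bound for a $(2n+1)$-diagonal $\Omega$ must be degree roughly $n/2$, not $n$. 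With only $\deg g\le n$ the first displayed equality ${\cal Z}({\cal C})=\{f({\cal C}):f\in\C[z,z^{-1}]\}$ still follows, but the second one --- the actual content concerning ${\cal Z}_{2m}$ and ${\cal Z}_{2m+1}$ --- does not: for $\Omega\in{\cal Z}_{2m}({\cal C})$ you would only get $\Omega=g({\cal C})$ with $\deg g\le 2m$, and to kill the coefficients $a_j$ with $m<|j|\le 2m$ you would have to exclude cancellation between ${\cal C}^{j}$ and ${\cal C}^{-j}$ on their common outermost diagonals (both are $(4j+1)$-diagonal), an analysis you only gesture at in your closing paragraph.

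The missing idea, which is exactly what the paper uses and which makes the bookkeeping free, is that your own formula for $g$ involves only columns $k\le n$, so $g\in\spn\{x_0,\dots,x_n\}$; and since the $x_k$ are by definition the orthonormalization of the ordered sequence $(1,z^{-1},z,z^{-2},z^2,\dots)$, this span is exactly $\spn\{1,z^{-1},z,\dots,z^{-m},z^m\}$ when $n=2m$, and exactly the same span enlarged by the single monomial $z^{-m-1}$ when $n=2m+1$. This yields the sharp bound at once: in the even case there is nothing left to do, and in the odd case only the one coefficient $a_{-m-1}$ must be eliminated. For that, if $a_{-m-1}\ne0$ then $\Omega=g({\cal C})$ contains $a_{-m-1}({\cal C}^\dag)^{m+1}$ as its unique term of bandwidth $4m+5$, and since the outermost diagonals of $({\cal C}^\dag)^{m+1}$ are not identically zero (their entries are products of $\rho$'s, as in the $\Gamma(n)$ computation of the Appendix), $\Omega$ would fail to be $(4m+3)$-diagonal --- a contradiction. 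Note that with a single excess power no cancellation issue can arise, so this is the only place where strictness of the band of ${\cal C}^{\pm j}$ is needed. As your plan stands, the identification ${\cal Z}_{2m}={\cal Z}_{2m+1}=\{f({\cal C}):f\in\spn\{1,z^{-1},z,\dots,z^{-m},z^m\}\}$ is deferred rather than proved, so the proposal is incomplete at precisely the point the proposition is about.
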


\begin{proof}
If $x_n$ are the OLP related to ${\cal C}$ we know from Proposition~\ref{prop:ker} that, for every $z\in\C\setminus\{0\}$, $\bs x(z)$ spans the set of formal eigenvectors of ${\cal C}$ with eigenvalue $z$. Thus, due to the linear independence of the OLP, given a band matrix $\Omega$,
$$
 [{\cal C},\Omega]=0 
 \;\Leftrightarrow\; 
 [{\cal C},\Omega]\bs x(z)=0 
 \;\Leftrightarrow\; 
 ({\cal C}-zI)\Omega\bs x(z)=0 
 \;\Leftrightarrow\; 
 \Omega\bs x(z)=f(z)\bs x(z),
$$
for some function $f\colon\C\to\C$. From the last equality, 
$$
 f(z) = f(z)x_0(z) = \sum_k\Omega_{0,k}\,x_k(z),
$$
hence $f\in\C[z,z^{-1}]$ because $\Omega$ is banded. 

Also, if $\Omega$ is $(2n+1)$-diagonal,
$$
 f \in \spn\{x_k\}_{k=0}^n =
 \begin{cases}
 \spn\{1,z^{-1},z,z^{-2},z^2,\dots,z^{-m},z^m\}, & n=2m,
 \\
 \spn\{1,z^{-1},z,z^{-2},z^2,\dots,z^{-m},z^m,z^{-m-1}\}, & n=2m+1,
 \end{cases}
$$ 
so that, for some $a_j\in\C$, we have that $\Omega=\sum_{j=-m}^ma_j{\cal C}^j$ if $\Omega$ is $(4m+1)$-diagonal, while $\Omega=\sum_{j=-m-1}^ma_j{\cal C}^j$ when $\Omega$ is $(4m+3)$-diagonal. However, since ${\cal C}^j$ and ${\cal C}^{-j}=({\cal C}^\dag)^j$ are both strictly $(4j+1)$-diagonal, $a_{-m-1}=0$ in the last case because otherwise $\Omega$ would be $(2n+3)$-diagonal but not $(2n+1)$-diagonal.

This proves that
$$
\begin{aligned}
 & {\cal Z}({\cal C}) \subset \{f({\cal C}):f\in\C[z,z^{-1}]\},
 \\ 
 & {\cal Z}_{2m}({\cal C}),{\cal Z}_{2m+1}({\cal C}) \subset 
 \{f({\cal C}):f\in\spn\{1,z^{-1},z,z^{-2},z^2,\dots,z^{-m},z^m\}\}.
\end{aligned}
$$
The reverse inclusions are obvious.
\end{proof}

The above result permits the integration of the CMV ad-conditions: if $\cal C$ is a CMV matrix, for any band matrix $\Omega$, 
$$
 (\ad\,{\cal C})^{n+1}\Omega = 0 
 \;\Leftrightarrow\;
 (\ad\,{\cal C})^n\Omega = f({\cal C}), \quad f\in\C[z,z^{-1}].
$$
In the case of $\Omega$ diagonal we can say much more. Indeed, we will state the result for $\Omega$ tridiagonal because it needs no more effort due to the equality ${\cal Z}_{2m}({\cal C}) = {\cal Z}_{2m+1}({\cal C})$.

\begin{prop} \label{prop:INT}
If $\cal C$ is a CMV matrix and $\Omega$ is a tridiagonal matrix, then
$$
 (\ad\,{\cal C})^{n+1}\Omega = 0 
 \;\Leftrightarrow\; 
 (\ad\,{\cal C})^n\Omega=a{\cal C}^n 
 \;\Leftrightarrow\; 
 (\ad_n\,{\cal C})\Omega=aI, 
 \qquad a\in\C.
$$
\end{prop}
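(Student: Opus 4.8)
The plan is to treat the three statements separately, isolating the single implication that carries all the weight. The implication $(\ad\,{\cal C})^n\Omega=a{\cal C}^n\Rightarrow(\ad\,{\cal C})^{n+1}\Omega=0$ is immediate, since $(\ad\,{\cal C})(a{\cal C}^n)=a[{\cal C},{\cal C}^n]=0$. The equivalence $(\ad\,{\cal C})^n\Omega=a{\cal C}^n\Leftrightarrow(\ad_n\,{\cal C})\Omega=aI$ is a direct computation from Definition~\ref{def:H-ad}: substituting $a{\cal C}^n$ into the two branches and using ${\cal C}^\dag={\cal C}^{-1}$, ${\cal C}={\cal L}{\cal M}$ and the unitarity of $\cal L,\cal M$ collapses the products to $aI$ (for $n=2m$ one has $({\cal C}^\dag)^m{\cal C}^{2m}({\cal C}^\dag)^m=I$; for $n=2m+1$ one has ${\cal L}^\dag{\cal C}{\cal M}^\dag={\cal L}^\dag{\cal L}{\cal M}{\cal M}^\dag=I$). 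Since every factor applied here is invertible, the computation reverses, giving the equivalence. Thus the whole content lies in the remaining implication $(\ad\,{\cal C})^{n+1}\Omega=0\Rightarrow(\ad\,{\cal C})^n\Omega=a{\cal C}^n$.

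For that implication I would first invoke the ad-integration established above via Proposition~\ref{prop:Z}: the hypothesis $(\ad\,{\cal C})^{n+1}\Omega=0$ forces $(\ad\,{\cal C})^n\Omega=f({\cal C})$ for some $f\in\C[z,z^{-1}]$. Next I would bound the support of $f$ by a bandwidth count. Since $\Omega$ is $3$-diagonal and each application of $(\ad\,{\cal C})$ increases the half-bandwidth of a band matrix by at most two, $(\ad\,{\cal C})^n\Omega$ is at most $(4n+3)$-diagonal; as each ${\cal C}^j$ is strictly $(4|j|+1)$-diagonal (as noted in the proof of Proposition~\ref{prop:Z}), writing $f({\cal C})=\sum_j a_j{\cal C}^j$ and comparing bandwidths yields $4|j|+1\le 4n+3$, i.e. $\supp f\subseteq\{-n,\dots,n\}$.

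The crux, and the step I expect to be the main obstacle, is upgrading this to $f(z)=a z^n$, a single monomial. Here I would bring in Proposition~\ref{prop:ad+}. Applying the same ad-integration to the matrix $\Omega^\dag$, which is again tridiagonal, gives $(\ad\,{\cal C})^n\Omega^\dag=g({\cal C})$ with $\supp g\subseteq\{-n,\dots,n\}$ by the identical count. On the other hand, Proposition~\ref{prop:ad+} rewrites the left-hand side as ${\cal C}^n\big((\ad\,{\cal C})^n\Omega\big)^\dag{\cal C}^n={\cal C}^n f({\cal C})^\dag{\cal C}^n$, and since $f({\cal C})^\dag=\sum_j\overline{a_j}\,{\cal C}^{-j}$ this equals $\sum_j\overline{a_j}\,{\cal C}^{2n-j}$. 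Equating the two descriptions of $(\ad\,{\cal C})^n\Omega^\dag$ and using that the powers ${\cal C}^k$ are linearly independent, every exponent $2n-j$ that contributes (with $j\in\{-n,\dots,n\}$, hence $2n-j\in\{n,\dots,3n\}$) must lie in $\{-n,\dots,n\}$; the only possibility is $2n-j=n$, i.e. $j=n$. Therefore $a_j=0$ for all $j\ne n$, so $f(z)=a_n z^n$ and $(\ad\,{\cal C})^n\Omega=a_n{\cal C}^n$, completing the implication.

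Two points deserve care. The exact bandwidth and the linear independence of the powers ${\cal C}^k$ are what make both the support bound and the final exponent-matching rigorous, so I would record these explicitly before using them. Moreover, the decisive argument is uniform in the parity of $n$ because it is carried out entirely at the level of $(\ad\,{\cal C})^n$ and powers of ${\cal C}$; the even/odd split of Definition~\ref{def:H-ad} intervenes only in the elementary equivalence $(\ad\,{\cal C})^n\Omega=a{\cal C}^n\Leftrightarrow(\ad_n\,{\cal C})\Omega=aI$ dispatched at the outset.
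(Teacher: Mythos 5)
Your proof is correct, and it rests on the same two pillars as the paper's own argument: ad-integration through the centralizer description of Proposition~\ref{prop:Z}, and the unitarity symmetry of Proposition~\ref{prop:ad+} combined with the linear independence of the powers $\{{\cal C}^j\}_{j\in\Z}$. The structural difference is in how the symmetry is deployed. The paper first reduces to Hermitian $\Omega$ via \eqref{eq:H}, so that Proposition~\ref{prop:ad+} becomes a self-consistency condition $\sum_{j=-n}^n a_j{\cal C}^j=\sum_{j=n}^{3n}\overline a_{2n-j}\,{\cal C}^j$ on a single expansion, forcing $a_j=0$ for $j\ne n$ (and, as a bonus, $a_n\in\R$); you instead keep $\Omega$ general and run the integration argument on the pair $(\Omega,\Omega^\dag)$, comparing two support-bounded expansions. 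Your variant avoids the real/imaginary splitting and any case analysis, at the price of two points you should make explicit. First, to integrate the ad-conditions for $\Omega^\dag$ you need $(\ad\,{\cal C})^{n+1}\Omega^\dag=0$, which is exactly \eqref{eq:H} (equivalently, Proposition~\ref{prop:ad+} applied at order $n+1$); so the Hermitian symmetry actually enters your proof twice, once silently. Second, your bandwidth count for the bound $\supp f\subseteq\{-n,\dots,n\}$ implicitly assumes no cancellation between ${\cal C}^{j}$ and ${\cal C}^{-j}$ on the outermost diagonals; this does hold, because their non-null entries occupy complementary positions, but it is cleaner to quote the second identity of Proposition~\ref{prop:Z} directly: $(\ad\,{\cal C})^n\Omega$ is $(4n+3)$-diagonal and commutes with ${\cal C}$, hence lies in ${\cal Z}_{2n+1}({\cal C})={\cal Z}_{2n}({\cal C})$, which is precisely the desired support bound (this is what the paper does, and is the very reason the proposition is stated for tridiagonal rather than only diagonal $\Omega$). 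Finally, do record the linear independence of $\{{\cal C}^j\}_{j\in\Z}$, which you flag but leave unproved: if $\sum_jb_j{\cal C}^j=0$, applying it to $\bs x(z)$ gives $\sum_jb_jz^j=0$, hence $b_j=0$ for all $j$.
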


\begin{proof}
Let $\Omega$ be tridiagonal. Then, $(\ad\,{\cal C})^n\Omega$ is $(4n+3)$-diagonal and
$$
 (\ad\,{\cal C})^{n+1}\Omega = 0 
 \;\Leftrightarrow\; 
 (\ad\,{\cal C})^n\Omega \in {\cal Z}_{2n+1}({\cal C}) 
 \;\Leftrightarrow\; 
 (\ad\,{\cal C})^n\Omega = \sum_{j=-n}^n a_j{\cal C}^j, \; a_j\in\C.
$$

If $\Omega$ is Hermitian, Proposition~\ref{prop:ad+} states that $(\ad\,{\cal C})^n\Omega={\cal C}^n((\ad\,{\cal C})^n\Omega)^\dag{\cal C}^n$, hence
$$
\sum_{j=-r}^ra_j{\cal C}^j = {\cal C}^r \left(\sum_{j=-r}^r\overline a_j{\cal C}^{-j}\right) {\cal C}^r
= \sum_{j=-r}^r\overline a_j{\cal C}^{2r-j} = \sum_{j=r}^{3r}\overline a_{2r-j}{\cal C}^j.
$$
On the other hand, the set $\{{\cal C}^j\}_{j\in\Z}$ is linearly independent: if $\sum_jb_j{\cal C}^j=0$, $b_j\in\C$, then $0=\sum_jb_j{\cal C}^j\bs
x(z)=(\sum_jb_jz^j)\bs x(z)$, thus $\sum_jb_jz^j=0$ and $b_j=0$ for all $j$.
From these results we conclude that $a_j=0$ for $j\neq r$ and $a_r\in\R$, which proves the proposition for a Hermitian $\Omega$.

The result for a non-Hermitian $\Omega$ follows from \eqref{eq:H}. 
\end{proof}

Similar ad-integration techniques to those in Proposition~\ref{prop:INT} have been considered previously in \cite{Haine} for Jacobi bispectral problems. 
 
As an illustration of the ad-integration techniques, we will use them to present a simplified resolution of the ad-conditions $(\ad\,{\cal C})^2\Lambda=0$ for a diagonal matrix $\Lambda$. According to Proposition~\ref{prop:INT}, this is equivalent to solve $(\ad_1\,{\cal C})\Lambda\propto I$. Using the notation $\lambda_k$, $k\ge0$, for the diagonal coefficients of $\Lambda$, a simple calculation yields
$$
\begin{gathered}
 (\ad_1\,{\cal C})\Lambda = 
 \left(
 \begin{smallmatrix}
 	b_0 & -a_0
	\\[3pt]
	-\overline{a}_0 & -b_1 & \overline{a}_1
	\\[3pt]
	& a_1 & b_2 & -a_2
	\\[3pt]
	& & -\overline{a}_2 & -b_3 & \overline{a}_3
	\\[3pt]
	& & & a_3 & b_4 & -a_4
	\\[-2pt]
	& & & & \ddots & \ddots & \ddots
 \end{smallmatrix}
 \right),
 \\
 a_k = (\lambda_k-\lambda_{k+1})\rho_k\alpha_k,
 \qquad
 b_k = 
 \begin{cases}
 	(\lambda_0-\lambda_1)\rho_0^2, 
	& k=0,
 	\\
	(\lambda_{k-1}-\lambda_k)\rho_{k-1}^2 + 
	(\lambda_k-\lambda_{k+1})\rho_k^2,
	& k\ge1.
 \end{cases}
\end{gathered}
$$
Therefore,
$$
 (\ad_1\,{\cal C})\Lambda \propto I 
 \;\Leftrightarrow\;
 \begin{cases} 
 a_k=0,
 \\
 b_{k+1}=-b_k,
 \end{cases}
 \kern-9pt\Leftrightarrow\;
 \begin{cases}
 	(\lambda_k-\lambda_{k+1})\alpha_k=0,
	\\
	(\lambda_k-\lambda_{k+1})\rho_k^2 
	= (-1)^k (k+1) (\lambda_0-\lambda_1)\rho_0^2.
 \end{cases}
$$
If $\alpha_j\ne0$ for some $j$, the above relations imply that $\lambda_k=\lambda_{k+1}$ for all $k$, i.e. $\Lambda\propto I$. 
On the other hand, when $\alpha_k=0$ for all $k$ the condition $a_k=0$ is automatically satisfied, while $b_{k+1}=-b_k$ determines $\Lambda$ as in \eqref{eq:sol-Leb} up to numerical factors and addition of multiples of the identity.

\section{The Hermitian ad-conditions: ad-factorization} \label{sec:ad-F}

Another useful tool to deal with the CMV bispectral problem is the ad-factorization of the Hermitian ad-operator. The original ad-operator $(\ad\,{\cal C})^n$ is by definition a power of the simple ad-operator $(\ad\,{\cal C})$, but this is no longer true for the Hermitian ad-operator $(\ad_n\,{\cal C})$.  To understand the ad-factorization of $(\ad_n\,{\cal C})$ let us exploit again the possibility of approaching the CMV bispectral problem using two kinds of OLP, $x_n$ and $\chi_n$. 

Section \ref{sec:BSP-CMV} shows that the previous results about the bispectral problem for $x_n$ can be translated to the bispectral problem for $\chi_n$ just by performing the following transformations:
$$
\begin{matrix}
 & x_n & \longrightarrow & \chi_n
 \\
 & {\cal C} & \longrightarrow & {\cal C}^t
 \\
 & {\cal L} & \longrightarrow & {\cal M}
 \\
 & {\cal M} & \longrightarrow & {\cal L}
\end{matrix}
$$
For instance, the bispectral problem $D\bs\chi=\Lambda\bs\chi$ can be solved by using the ad-conditions $(\ad\,{\cal C}^t)^n\Lambda=0$. These ad-conditions are
equivalent to the Hermitian ones $(\ad_n\,{\cal C}^t)\Lambda=0$, where the definition and properties of $(\ad_n\,{\cal C}^t)$ can be obtained from those of $(\ad_n\,{\cal C})$ by simply making the exchanges ${\cal C} \leftrightarrow {\cal C}^t$ and ${\cal L} \leftrightarrow {\cal M}$.

A number of properties relate the ad-operators $(\ad_n\,{\cal C})$ and $(\ad_n\,{\cal C}^t)$, among them the ad-factorization that we are interested in. The following proposition summarizes these properties.

\begin{prop} \label{prop:C-Ct} 
Given a CMV matrix $\cal C$, the following relations hold for any band matrix $\Omega$:

\begin{itemize}

\item[(i)] $((\ad_n\,{\cal C})\Omega)^t = (-1)^n (\ad_n\,{\cal C}^t)\Omega^t$.

\item[(ii)] $(\ad_n\,{\cal C})({\cal L}\Omega{\cal M}) =
\begin{cases}
 {\cal L}((\ad_n\,{\cal C}^t)\Omega){\cal M}, & \text{ even } n,
 \\
 {\cal M}((\ad_n\,{\cal C}^t)\Omega){\cal L}, & \text{ odd } n.
\end{cases}$

\item[(iii)] $(\ad_n\,{\cal C})\Omega = 
(\ad_{n-k}\,{\cal C}(k))((\ad_k\,{\cal C})\Omega),
\quad 
{\cal C}(k) :=
\begin{cases}
 {\cal C}, & \text{ even } k,
 \\
 {\cal C}^t, & \text{ odd } k.
\end{cases}$

\end{itemize}
\end{prop}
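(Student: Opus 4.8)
The three identities all rest on the intertwining relations forced by the factorizations ${\cal C}={\cal L}{\cal M}$ and ${\cal C}^t={\cal M}{\cal L}$ together with the symmetry and unitarity of ${\cal L},{\cal M}$ (hence of ${\cal L}^\dag,{\cal M}^\dag$). Before touching the Hermitian ad-operator I would record the elementary consequences ${\cal C}{\cal L}={\cal L}{\cal C}^t$, ${\cal M}{\cal C}={\cal C}^t{\cal M}$, $({\cal C}^\dag)^t=({\cal C}^t)^\dag$, and, taking inverses and adjoints, ${\cal C}^\dag{\cal L}={\cal L}({\cal C}^t)^\dag$, ${\cal M}{\cal C}^\dag=({\cal C}^t)^\dag{\cal M}$, together with $({\cal C}^t)^\dag{\cal L}^\dag={\cal L}^\dag{\cal C}^\dag$ and ${\cal M}^\dag({\cal C}^t)^\dag={\cal C}^\dag{\cal M}^\dag$ (the last two being immediate from ${\cal C}^\dag={\cal M}^\dag{\cal L}^\dag$ and $({\cal C}^t)^\dag={\cal L}^\dag{\cal M}^\dag$), as well as ${\cal C}^t{\cal L}^\dag={\cal L}^\dag{\cal C}$ and ${\cal M}^\dag{\cal C}^t={\cal C}{\cal M}^\dag$. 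I would also note that ${\cal C}^\dag={\cal C}^{-1}$ commutes with ${\cal C}$, so every power $({\cal C}^\dag)^m$ passes freely through $(\ad\,{\cal C})$, i.e. $(\ad\,{\cal C})^j(({\cal C}^\dag)^m Y ({\cal C}^\dag)^m)=({\cal C}^\dag)^m((\ad\,{\cal C})^j Y)({\cal C}^\dag)^m$. These are the only facts needed.

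For (i) I would work straight from Definition~\ref{def:H-ad}. The plain ad satisfies $((\ad\,{\cal C})\Omega)^t=-(\ad\,{\cal C}^t)\Omega^t$, whence $((\ad\,{\cal C})^n\Omega)^t=(-1)^n(\ad\,{\cal C}^t)^n\Omega^t$ by induction. Transposing the definition of $(\ad_n\,{\cal C})\Omega$ and using $(({\cal C}^\dag)^m)^t=(({\cal C}^t)^\dag)^m$ together with the symmetry of ${\cal L}^\dag,{\cal M}^\dag$, the sandwich factors reassemble into exactly those defining $(\ad_n\,{\cal C}^t)\Omega^t$, while the scalar $(-1)^n$ is carried along; separating the cases $n=2m$ and $n=2m+1$ gives (i).

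For (ii) the engine is the plain-ad factorization $(\ad\,{\cal C})({\cal L}\Omega{\cal M})={\cal L}((\ad\,{\cal C}^t)\Omega){\cal M}$, immediate from ${\cal C}{\cal L}={\cal L}{\cal C}^t$ and ${\cal M}{\cal C}={\cal C}^t{\cal M}$; iterating yields $(\ad\,{\cal C})^n({\cal L}\Omega{\cal M})={\cal L}((\ad\,{\cal C}^t)^n\Omega){\cal M}$. Substituting this into the definition of $(\ad_n\,{\cal C})({\cal L}\Omega{\cal M})$ and sliding the $({\cal C}^\dag)^m$ factors across ${\cal L},{\cal M}$ via ${\cal C}^\dag{\cal L}={\cal L}({\cal C}^t)^\dag$ and ${\cal M}{\cal C}^\dag=({\cal C}^t)^\dag{\cal M}$ turns them into $(({\cal C}^t)^\dag)^m$ on the inside. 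For even $n$ this already assembles ${\cal L}((\ad_n\,{\cal C}^t)\Omega){\cal M}$; for odd $n$ the extra outer ${\cal L}^\dag$ and ${\cal M}^\dag$ meet the neighbouring ${\cal L},{\cal M}$ and collapse by unitarity, and comparison with the definition of $(\ad_n\,{\cal C}^t)\Omega$ (whose outer factors are ${\cal M}^\dag,{\cal L}^\dag$) reproduces the stated ${\cal M}((\ad_n\,{\cal C}^t)\Omega){\cal L}$.

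Part (iii) is the substantive one, and the plan is to verify it by the same sandwich bookkeeping rather than by induction. Writing $(\ad_n\,{\cal C})\Omega=P_n\,((\ad\,{\cal C})^n\Omega)\,Q_n$ with $P_n=Q_n=({\cal C}^\dag)^m$ for $n=2m$ and $P_n={\cal L}^\dag({\cal C}^\dag)^m$, $Q_n=({\cal C}^\dag)^m{\cal M}^\dag$ for $n=2m+1$ (and analogously for $(\ad_n\,{\cal C}^t)$ with ${\cal L}\leftrightarrow{\cal M}$), I set $\Psi=(\ad_k\,{\cal C})\Omega$ and expand $(\ad_{n-k}\,{\cal C}(k))\Psi$. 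When $k$ is even, ${\cal C}(k)={\cal C}$ and the inner factors $({\cal C}^\dag)^{k/2}$ commute through $(\ad\,{\cal C})^{n-k}$, so the expression collapses to $P_{n-k}({\cal C}^\dag)^{k/2}((\ad\,{\cal C})^n\Omega)({\cal C}^\dag)^{k/2}Q_{n-k}$, and a parity check gives $P_{n-k}({\cal C}^\dag)^{k/2}=P_n$ and $({\cal C}^\dag)^{k/2}Q_{n-k}=Q_n$. When $k$ is odd, ${\cal C}(k)={\cal C}^t$ and $\Psi={\cal L}^\dag Y{\cal M}^\dag$ with $Y=({\cal C}^\dag)^{(k-1)/2}((\ad\,{\cal C})^k\Omega)({\cal C}^\dag)^{(k-1)/2}$; here I would use the dagger-sandwich intertwining $(\ad\,{\cal C}^t)({\cal L}^\dag Y{\cal M}^\dag)={\cal L}^\dag((\ad\,{\cal C})Y){\cal M}^\dag$ (a direct consequence of ${\cal C}^t{\cal L}^\dag={\cal L}^\dag{\cal C}$ and ${\cal M}^\dag{\cal C}^t={\cal C}{\cal M}^\dag$) to move all $n-k$ copies of $(\ad\,{\cal C}^t)$ inside, after which the outer factors of $(\ad_{n-k}\,{\cal C}^t)$ are pushed across ${\cal L}^\dag,{\cal M}^\dag$ by $({\cal C}^t)^\dag{\cal L}^\dag={\cal L}^\dag{\cal C}^\dag$ and ${\cal M}^\dag({\cal C}^t)^\dag={\cal C}^\dag{\cal M}^\dag$ (absorbing a leftover factor through ${\cal M}^\dag{\cal L}^\dag={\cal C}^\dag$). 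In every combination this reassembles $P_n((\ad\,{\cal C})^n\Omega)Q_n=(\ad_n\,{\cal C})\Omega$. The main obstacle is exactly this odd-$k$ accounting: one must treat the four parity combinations of $k$ and $n-k$ separately and check each time that the accumulated powers of ${\cal C}^\dag$ and the single ${\cal L}^\dag$ or ${\cal M}^\dag$ factors land precisely on the sandwich of $(\ad_n\,{\cal C})$. Once the intertwining lemmas of the first paragraph are in hand this is pure bookkeeping, but it is the step where a half-integer exponent can easily go astray, so I would lay it out as a short table of the four cases.
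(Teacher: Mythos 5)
Your proposal is correct. Parts (i) and (ii) are handled exactly as in the paper: transpose, respectively iterate, the plain-ad intertwinings $((\ad\,{\cal C})\Omega)^t=-(\ad\,{\cal C}^t)\Omega^t$ and $(\ad\,{\cal C})({\cal L}\Omega{\cal M})={\cal L}((\ad\,{\cal C}^t)\Omega){\cal M}$, then reassemble the sandwich factors of Definition~\ref{def:H-ad} using ${\cal C}^\dag{\cal L}={\cal L}({\cal C}^t)^\dag$, ${\cal M}{\cal C}^\dag=({\cal C}^t)^\dag{\cal M}$ and unitarity. For (iii), however, your route genuinely differs from the paper's. The paper derives (iii) as a consequence of (ii): it inverts Definition~\ref{def:H-ad} to express $(\ad\,{\cal C})^k\Omega$ and the action of $(\ad\,{\cal C})^{n-k}$ through the Hermitian ads, inserts these into $(\ad\,{\cal C})^n\Omega=(\ad\,{\cal C})^{n-k}((\ad\,{\cal C})^k\Omega)$, and finishes by applying (ii) to $(\ad_{n-k}\,{\cal C})\bigl({\cal L}\,((\ad_k\,{\cal C})\Omega)\,{\cal M}\bigr)$. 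You bypass (ii) entirely and expand $(\ad_{n-k}\,{\cal C}(k))((\ad_k\,{\cal C})\Omega)$ head-on, using instead the daggered intertwining $(\ad\,{\cal C}^t)({\cal L}^\dag Y{\cal M}^\dag)={\cal L}^\dag((\ad\,{\cal C})Y){\cal M}^\dag$ (valid since ${\cal C}^t{\cal L}^\dag={\cal L}^\dag{\cal C}$ and ${\cal M}^\dag{\cal C}^t={\cal C}{\cal M}^\dag$) together with the fact that $({\cal C}^\dag)^j$ passes through $(\ad\,{\cal C})$. I checked your four parity cases and they do close: for instance, with $n=2m$, $k=2j+1$, the outer factors of $(\ad_{n-k}\,{\cal C}^t)$ slide across ${\cal L}^\dag,{\cal M}^\dag$ via $({\cal C}^t)^\dag{\cal L}^\dag={\cal L}^\dag{\cal C}^\dag$ and ${\cal M}^\dag({\cal C}^t)^\dag={\cal C}^\dag{\cal M}^\dag$, then collapse through ${\cal M}^\dag{\cal L}^\dag={\cal C}^\dag$ to give exactly $({\cal C}^\dag)^m((\ad\,{\cal C})^n\Omega)({\cal C}^\dag)^m=(\ad_n\,{\cal C})\Omega$, and the remaining cases work analogously, so no half-integer exponent goes astray. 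The trade-off between the two arguments: the paper's reduction reuses (ii) as a black box, so each parity case needs only bookkeeping of powers of ${\cal C}$; yours is self-contained (it would prove (iii) even before (ii) was available) and makes the composition law transparent at the level of plain ads, at the price of carrying the daggered intertwinings explicitly through all four cases.
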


\begin{proof}
Property {\it (i)} follows from Definition~\ref{def:H-ad} of $(\ad_n\,{\cal C})\Omega$ and the corresponding one for $(\ad_n\,{\cal C}^t)\Omega$, together
with the relation $((\ad\,{\cal C})^n\Omega)^t=(-1)^n(\ad\,{\cal C}^t)^n\Omega^t$, obtained iterating $((\ad\,{\cal C})\Omega)^t=-(\ad\,{\cal C}^t)\Omega^t$.

Analogously, the iteration of $(\ad\,{\cal C})({\cal L}\Omega{\cal M})={\cal L}((\ad\,{\cal C}^t)\Omega){\cal M}$ gives rise to the identity $(\ad\,{\cal C})^n({\cal L}\Omega{\cal M})={\cal L}((\ad\,{\cal C}^t)^n\Omega){\cal M}$.
Introducing in this equality Definition~\ref{def:H-ad} and its counterpart for $(\ad_n\,{\cal C}^t)\Omega$, when $n=2m+1$ leads to
$$
\begin{aligned}
 (\ad_n\,{\cal C})({\cal L}\Omega{\cal M}) 
 & = {\cal L}^\dag ({\cal C}^\dag)^m {\cal L} ({\cal C}^t)^m {\cal M} 
 ((\ad_n\,{\cal C}^t)\Omega) 
 {\cal L} ({\cal C}^t)^m {\cal M} ({\cal C}^\dag)^m {\cal M}^\dag =
 \\
 & = {\cal M} ((\ad_n\,{\cal C}^t)\Omega) {\cal L}.
\end{aligned}
$$
Here we have used that ${\cal L}({\cal C}^t)^m={\cal C}^m{\cal L}$ and $({\cal C}^t)^m{\cal M}={\cal M}{\cal C}^m$ due to the factorizations ${\cal C}={\cal L}{\cal M}$ and ${\cal C}^t={\cal M}{\cal L}$. This proves Property {\it (ii)} for odd $n$. The proof for even $n$ is similar.

Property {\it (iii)} is a direct consequence of Property {\it (ii)}. There are 4 cases to discuss depending on the parity of $n$ and $k$. We will show the proof for one of the cases, the others having a very similar proof. Consider an even $n=2m$ and an odd $k=2j+1$. Then, $n-k=2(m-j-1)+1$
is odd and
$$
\begin{gathered}
 (\ad\,{\cal C})^n\Omega 
 = {\cal C}^m ((\ad_n\,{\cal C})\Omega) {\cal C}^m,
 \qquad
 (\ad\,{\cal C})^k\Omega 
 = {\cal C}^j {\cal L} ((\ad_k\,{\cal C})\Omega) {\cal M} {\cal C}^j,
 \\
 (\ad\,{\cal C})^{n-k}\Omega 
 = {\cal C}^{m-j-1} {\cal L} 
 ((\ad_{n-k}\,{\cal C})\Omega) 
 {\cal M} {\cal C}^{m-j-1}.
\end{gathered}
$$
Using these relations and the factorization $(\ad\,{\cal C})^n\Omega = (\ad\,{\cal C})^{n-k} ((\ad\,{\cal C})^k\Omega)$ we get
$$
\begin{aligned}
 (\ad_n\,{\cal C})\Omega 
 & = {\cal C}^{-m} ((\ad\,{\cal C})^{n-k} ({\cal C}^j {\cal L} 
 ((\ad_k\,{\cal C})\Omega) 
 {\cal M} {\cal C}^j)) {\cal C}^{-m} =
 \\
 & = {\cal C}^{j-m} 
 ((\ad\,{\cal C})^{n-k} ({\cal L} ((\ad_k\,{\cal C})\Omega) {\cal M})) 
 {\cal C}^{j-m} =
 \\
 & = {\cal C}^{-1} {\cal L} ((\ad_{n-k}\,{\cal C})({\cal L} 
 ((\ad_k\,{\cal C})\Omega) {\cal M})) {\cal M} {\cal C}^{-1}.
\end{aligned}
$$
Finally, Property {\it (ii)} gives
$$
 (\ad_n\,{\cal C})\Omega 
 = {\cal C}^{-1} {\cal L} {\cal M} 
 ((\ad_{n-k}\,{\cal C}^t)((\ad_k\,{\cal C})\Omega)) 
 {\cal L}{\cal M} {\cal C}^{-1} 
 = (\ad_{n-k}\,{\cal C}^t) ((\ad_k\,{\cal C})\Omega).
$$
\end{proof}

Proposition~\ref{prop:C-Ct}.{\it (iii)} is the ad-factorization of $(\ad_n\,{\cal C})$. A special case is the recursive algorithm (\ref{eq:REC-Had}) for
$(\ad_n\,{\cal C})\Omega$ since it can be written as
$$
 (\ad_{n+1}\,{\cal C})\Omega 
 = (\ad_1\,{\cal C}(n)) ((\ad_n\,{\cal C})\Omega).
$$
The opposite special case, 
\begin{equation} \label{eq:ad-F-1}
 (\ad_{n+1}\,{\cal C})\Omega 
 = (\ad_n\,{\cal C}^t) ((\ad_1\,{\cal C})\Omega),
\end{equation}
will be particularly useful in dealing with the CMV bispectral problem for linear differential operators of arbitrary order.

\section{The general CMV bispectral problem} \label{sec:GEN-BSP}

We have proved that the CMV bispectral problem for linear differential operators of order not greater than two is trivial, i.e. the only bispectral CMV matrix is that one with null Verblunsky coefficients. The purpose of the present section is to generalize this result as much as possible by weakening the assumptions in different ways:
\begin{itemize}
\item[(A)] Admitting linear differential operators of arbitrary order.
\item[(B)] Requiring the eigenfunction condition with respect to the linear differential operator up to finitely many OLP. 
\item[(C)] Substituting the eigenfunction condition by the more general three-term difference-differential relation \eqref{eq:tri}.
\end{itemize} 

Remember that assuming (B) for a linear differential operator $D$ in $\C[z,z^{-1}]$ can be restated by saying that $D\bs x=\Omega\bs x$ with $\Omega$ almost diagonal (or almost tridiagonal if combined with (C)), i.e. $\Omega = \Omega_N \oplus \Lambda$ for some $N$, with $\Omega_N$ the $N \times N$ principal submatrix of $\Omega$ and $\Lambda$ diagonal. 

A first step in the direction pointed out in (B) is given by the following proposition.

\begin{prop} \label{prop:ad-C}
If $\cal C$ is a CMV matrix, then 
$$
 \Omega \text{ almost diagonal, } \; (\ad_1\,{\cal C})\Omega \text{ diagonal }
 \;\Rightarrow\;
 \Omega \text{ diagonal. }
$$
\end{prop}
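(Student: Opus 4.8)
The plan is to work directly with the explicit formula for $(\ad_1\,{\cal C})\Omega$ in the tridiagonal-$\Omega$ case, suitably generalized to a general banded Hermitian $\Omega$, and to exploit the hypothesis that $\Omega$ is almost diagonal to run an induction far out along the diagonals. Recall from Definition~\ref{def:H-ad} that $(\ad_1\,{\cal C})\Omega = {\cal M}\Omega{\cal M}^\dag - {\cal L}^\dag\Omega{\cal L}$, and that by \eqref{eq:H} we may assume without loss that $\Omega$ is Hermitian (splitting into $\re\,\Omega$ and $\im\,\Omega$), so that $(\ad_1\,{\cal C})\Omega$ is Hermitian too by \eqref{eq:H-Had}. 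Since ${\cal M}$ and ${\cal L}$ are $2\times2$-block diagonal, conjugation by them spreads each diagonal of $\Omega$ only into neighbouring diagonals; in particular, if $\Omega$ is $(2p+1)$-diagonal then so is $(\ad_1\,{\cal C})\Omega$, and the entries of $(\ad_1\,{\cal C})\Omega$ on its top diagonals are determined linearly by the top diagonals of $\Omega$ with coefficients built from $\alpha_n,\rho_n$.

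First I would write out, for a general Hermitian band matrix $\Omega$ with entries $\Omega_{j,k}$, the entries of $(\ad_1\,{\cal C})\Omega$ on the highest (say the $2p$-th super-) diagonal purely in terms of the highest diagonal of $\Omega$. Because ${\cal L},{\cal M}$ act by $\Theta_n$-blocks, the top-diagonal entries of ${\cal M}\Omega{\cal M}^\dag$ and of ${\cal L}^\dag\Omega{\cal L}$ are products of a single $\rho$ or $\alpha$ with the corresponding top entry of $\Omega$; thus the vanishing of the top diagonal of $(\ad_1\,{\cal C})\Omega$ (which is part of the hypothesis, since $(\ad_1\,{\cal C})\Omega$ is diagonal and $p\ge1$) yields a homogeneous linear recurrence relating consecutive entries of the top diagonal of $\Omega$. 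Now invoke the almost-diagonal hypothesis: $\Omega$ agrees with a diagonal matrix outside a finite $N\times N$ corner, so its top diagonal has only finitely many nonzero entries. Feeding this ``eventually zero'' boundary condition into the recurrence should force the entire top diagonal of $\Omega$ to vanish, since the recurrence propagates a zero tail backwards (the multiplying coefficients $\rho_n$ never vanish, $\rho_n=\sqrt{1-|\alpha_n|^2}>0$). Having killed the outermost diagonal, one repeats the argument on the next diagonal down, now knowing the diagonals above it are zero, and descends by induction until only the main (and possibly first sub/super) diagonals remain; a final step handles the tridiagonal-to-diagonal reduction, which is exactly the computation already displayed in Section~\ref{sec:ad-I}.

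The step I expect to be the main obstacle is the backward propagation argument: one must verify that the recurrence coming from the vanishing top diagonal genuinely has no nonzero finitely-supported solution, i.e. that the coefficient attached to the ``outgoing'' entry is always nonzero so that a zero tail forces everything to the left to be zero as well. This hinges on the precise form of the $\Theta_n$-conjugation: the relevant coefficient is a pure power of $\rho$'s (and hence strictly positive), not something that can accidentally vanish, but one has to be careful about the even/odd index parity built into ${\cal L}$ and ${\cal M}$ — the block structure alternates, so the recurrence will naturally split into even and odd index subsequences, each of which must be treated. A secondary subtlety is bookkeeping the finitely many exceptional entries in the $N\times N$ corner: the recurrence is only guaranteed homogeneous for indices beyond the corner, so the induction establishes vanishing of each diagonal \emph{eventually}, and one must check that ``eventually diagonal with $(\ad_1\,{\cal C})\Omega$ diagonal everywhere'' then forces the corner to be diagonal as well, closing the argument back down to the finite part.
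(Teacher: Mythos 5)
Your strategy (kill the outermost diagonal of $\Omega$ using the vanishing of the outermost diagonals of $(\ad_1\,{\cal C})\Omega$, then descend) is the one the paper uses for Proposition~\ref{prop:atd-C}, but applied here it hits a concrete obstruction at exactly the step you flag as the main obstacle, and your proposed resolution of that step is not correct. Suppose the outermost nonzero diagonal of $\Omega$ is the $p$-th superdiagonal. The top superdiagonal of $(\ad_1\,{\cal C})\Omega={\cal M}\Omega{\cal M}^\dag-{\cal L}^\dag\Omega{\cal L}$ is the $(p+2)$-nd, and its $(k,k+p+2)$ entry can only pick up the single products ${\cal M}_{k,k+1}\,\Omega_{k+1,k+p+1}\,\overline{{\cal M}_{k+p+2,k+p+1}}$ and $\overline{{\cal L}_{k+1,k}}\,\Omega_{k+1,k+p+1}\,{\cal L}_{k+p+1,k+p+2}$. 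Since the ${\cal M}$-blocks are $\{0\},\{1,2\},\{3,4\},\dots$ and the ${\cal L}$-blocks are $\{0,1\},\{2,3\},\dots$, the ${\cal M}$-product is nonzero only when $k$ and $k+p+1$ are both odd, and the ${\cal L}$-product only when both are even; either way this forces $p$ to be odd. So for odd $p$ you do get $\Omega_{k+1,k+p+1}=0$ for all $k$, with coefficients that are nonzero products of two $\rho$'s, exactly as you predict. But for \emph{even} $p$ the $(p+2)$-nd superdiagonal of $(\ad_1\,{\cal C})\Omega$ vanishes identically, independently of $\Omega$, and carries no information. The constraints that do survive for even $p$ (from the $(p+1)$-st and $p$-th superdiagonals) are not a closed recurrence on the top diagonal: they couple it to the $(p-1)$-st diagonal of $\Omega$, the guaranteed-nonzero $\rho$-products attach either to those lower-diagonal entries or to a step-two \emph{difference} of top-diagonal entries, and the coefficients multiplying individual top-diagonal entries are diagonal entries of $\Theta$-blocks, i.e.\ $\alpha$'s, which may vanish (they all vanish for the Lebesgue measure, a legitimate case since the proposition assumes nothing about the Verblunsky coefficients). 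Hence the diagonal-by-diagonal descent stalls the first time it meets an even $p$ -- which it must, since killing an odd-$p$ diagonal leaves an even-$p$ one on top.

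The paper avoids all of this with a different decomposition: induction on the size $N$ of the exceptional corner, peeling off the last \emph{column} rather than the top diagonal. Writing the Hermitian $\Omega$ as $\Omega_N\oplus\Lambda$, the block structure makes ${\cal L}^\dag\Omega{\cal L}$ and ${\cal M}\Omega{\cal M}^\dag$ split along the corner (for even $N$, ${\cal L}_N$ and ${\cal M}_{N+1}$ are direct sums of complete $\Theta$-blocks), so the only off-diagonal coupling in $(\ad_1\,{\cal C})\Omega$ sits in one column $v_N=\rho_{N-1}({\cal M}_N\Omega_N-\lambda_N\overline\alpha_{N-1}I_N)e_N$; diagonality forces $v_N=0$, and since $\rho_{N-1}\neq0$ and the finite block ${\cal M}_{N-1}$ is unitary (hence injective), the last off-diagonal column $u_{N-1}$ of $\Omega_N$ vanishes. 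Unitarity of a finite block thus replaces the nonvanishing-coefficient backward propagation you were hoping for, with no parity cases in $p$ and no case analysis on which $\alpha$'s vanish. If you reorganize your induction by columns instead of diagonals to repair the gap, you will essentially reconstruct the paper's proof.
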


\begin{proof}
Obviously, $\Omega$ is (almost) diagonal iff $\re\,\Omega$ and $\im\,\Omega$ are simultaneously (almost) diagonal. Also, from \eqref{eq:H-Had}, $\re\,((\ad_1\,{\cal C})\Omega)=(\ad_1\,{\cal C})\re\,\Omega$ and $\im\,((\ad_1\,{\cal C})\Omega)=(\ad_1\,{\cal C})\im\,\Omega$. Therefore, by taking real and imaginary parts, it suffices to prove the proposition for an Hermitian $\Omega$. 

By induction on $N$, it is enough to see that $\Omega=\Omega_N\oplus\Lambda$ with $\Lambda$ diagonal implies that $\Omega_N=\Omega_{N-1}\oplus \lambda_{N-1}$ with $\lambda_{N-1}\in\R$. If we write
$$
 \Omega_N = 
 \begin{pmatrix}
 \Omega_{N-1} & u_{N-1}
 \\
 u_{N-1}^\dagger & \lambda_{N-1}
 \end{pmatrix},
 \qquad
 u_{N-1}\in\C^{N-1},
 \qquad
 \lambda_{N-1}\in\R,
$$
all that we must prove is that $u_{N-1}=0$ whenever $(\ad_1\,{\cal C})\Omega$ is diagonal. 

Suppose that $\Omega=\Omega_N\oplus\Lambda$ for an even $N$, the proof for odd $N$ follows similar arguments. Then, denoting in general by the subscript $N$ the $N \times N$ principal submatrix and by the superscript $(N)$ the submatrix obtained deleting the first $N$ rows and columns, we have the splitting
$$
\begin{aligned}
 & A:={\cal L}^\dagger\Omega{\cal L} 
 = A_N \oplus A^{(N)},
 & \qquad & A_N = {\cal L}_N^\dagger\Omega_N{\cal L}_N,
 \\
 & B:={\cal M}\Omega{\cal M}^\dagger 
 = B_{N+1} \oplus B^{(N+1)},
 & & B_{N+1} = {\cal M}_{N+1}\Omega_{N+1}{\cal M}_{N+1}^\dagger,
\end{aligned}
$$
because ${\cal L}_N$ and ${\cal M}_{N+1}$ are direct sums of complete blocks $\Theta_k$. Since $\Omega_{N+1}=\Omega_N\oplus\lambda_N$ with $\lambda_N\in\R$ and 
$$
 {\cal M}_{N+1} 
 = {\cal M}_{N-1} \oplus \Theta_{N-1} = 
 \begin{pmatrix}
 {\cal M}_N & \rho_{N-1}e_N
 \\
 \rho_{N-1}e_N^\dagger & -\alpha_{N-1}
 \end{pmatrix},
 \qquad 
 e_N = 
 \begin{pmatrix} 0 \\[-5pt] \vdots \\[-5pt] 0 \\[-5pt] 1 \end{pmatrix}
 \in \C^N,
$$
we find that
$$
 B_{N+1} = 
 \begin{pmatrix}
 	B_N 
	& v_N 
	\\
	v_N^\dagger & *
 \end{pmatrix},
 \qquad
 v_N = \rho_{N-1}({\cal M}_N\Omega_N-\lambda_N\overline\alpha_{N-1}I_N)e_N.
$$
Therefore,
$$
 (\ad_1\,{\cal C})\Omega 
 = B-A =
 \left(
 \begin{array}{c|c}
 	B_N-A_N 
	& v_N \kern15pt 0
	\\ \hline
	\begin{array}{c} 
		v_N^\dagger \\ 0 \end{array} 
		& B^{(N)}-A^{(N)}
 	\end{array}
 \right),
$$
where $0$ stands for the null matrix of the appropriate size. Hence, $(\ad_1\,{\cal C})\Omega$ diagonal implies $v_N=0$. Bearing in mind that ${\cal M}_N={\cal M}_{N-1}\oplus\overline\alpha_{N-1}$ and ${\cal M}_{N-1}$ is unitary we conclude that
$$
 v_N = 0 
 \;\Rightarrow\;
 \lambda_N\overline\alpha_{N-1}e_N = {\cal M}_N\Omega_Ne_N =
 \begin{pmatrix} 
 	{\cal M}_{N-1}u_{N-1} \\[-3pt] \lambda_{N-1}\overline\alpha_{N-1} 
 \end{pmatrix} 
 \;\Rightarrow\; 
 u_{N-1} = 0.
$$
\end{proof}

Combining the previous proposition and the results of the calculations at the end of Section~\ref{sec:ad-I} we get the following result.

\begin{cor} \label{cor:ad-C}
If $\cal C$ is a CMV matrix whose Verblunsky coefficients are not all null, then 
$$
 \Omega \text{ almost diagonal, } \; (\ad_1\,{\cal C})\Omega \propto I
 \;\Rightarrow\;
 \Omega \propto I.
$$
\end{cor}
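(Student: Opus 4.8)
The plan is to read the hypothesis $(\ad_1\,{\cal C})\Omega \propto I$ as two nested pieces of information. First, a scalar multiple of the identity is in particular diagonal, so $(\ad_1\,{\cal C})\Omega$ is diagonal; since $\Omega$ is almost diagonal, Proposition~\ref{prop:ad-C} applies verbatim and forces $\Omega$ to be genuinely diagonal. This is the step that does the real work: it removes the finite off-diagonal block allowed by ``almost diagonal'' and lets us replace $\Omega$ by a diagonal matrix $\Lambda$ with real entries $\lambda_k$, $k\ge0$ (the reduction to the Hermitian case being free by \eqref{eq:H-Had}, exactly as in Proposition~\ref{prop:ad-C}).

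Second, once $\Omega=\Lambda$ is diagonal I would invoke the explicit resolution of $(\ad_1\,{\cal C})\Lambda\propto I$ carried out at the end of Section~\ref{sec:ad-I}, which shows the condition is equivalent to the two families
$$
 (\lambda_k-\lambda_{k+1})\alpha_k=0,
 \qquad
 (\lambda_k-\lambda_{k+1})\rho_k^2=(-1)^k(k+1)(\lambda_0-\lambda_1)\rho_0^2,
 \qquad k\ge0.
$$
By hypothesis some $\alpha_j\ne0$, so the first family at $k=j$ gives $\lambda_j=\lambda_{j+1}$. Feeding this into the second family at $k=j$, and using $\rho_0^2=1-|\alpha_0|^2>0$ (as $\alpha_0\in\D$), forces $\lambda_0=\lambda_1$. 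The second family then has vanishing right-hand side for every $k$, and since $\rho_k^2>0$ for all $k$ we conclude $\lambda_k=\lambda_{k+1}$ for all $k$, i.e. $\Lambda\propto I$.

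The main obstacle is really already resolved upstream: the nontrivial content is Proposition~\ref{prop:ad-C}, whose inductive elimination of the finite block $\Omega_N$ is what lets us pass from ``almost diagonal'' to ``diagonal''. Once we are in the diagonal setting the argument is the same scalar computation as in the order-two case, the only new subtlety being that the strengthened hypothesis $\propto I$ (rather than merely ``diagonal'') is exactly what supplies the second family of equations and, through the strict positivity $\rho_k^2>0$, propagates the equality $\lambda_j=\lambda_{j+1}$ from a single nonzero Verblunsky coefficient to all indices.
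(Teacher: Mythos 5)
Your proof is correct and follows exactly the paper's route: the paper obtains this corollary precisely by combining Proposition~\ref{prop:ad-C} (to upgrade ``almost diagonal'' to ``diagonal'') with the explicit resolution of $(\ad_1\,{\cal C})\Lambda\propto I$ at the end of Section~\ref{sec:ad-I}. Your write-up merely makes explicit the details (the Hermitian reduction via \eqref{eq:H-Had} and the propagation $\lambda_j=\lambda_{j+1}\Rightarrow\lambda_0=\lambda_1\Rightarrow\Lambda\propto I$ using $\rho_k^2>0$) that the paper leaves to the reader.
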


We have seen that an almost diagonal matrix satisfying certain ad-conditions must be actually diagonal. A similar result states that, under some ad-conditions, an almost tridiagonal matrix becomes almost diagonal.  

\begin{prop} \label{prop:atd-C}
If $\cal C$ is a CMV matrix, for any $n\in\N$, 
$$
 \Omega \text{ almost tridiagonal, } \; (\ad_n\,{\cal C})\Omega=0
 \;\Rightarrow\;
 \Omega \text{ almost diagonal. }
$$
\end{prop}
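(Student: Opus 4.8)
The plan is to convert the matrix hypothesis into a statement about a differential operator and then exploit the Laurent-degree structure of the OLP. First I would reduce to the Hermitian case: since $\re\,\Omega$ and $\im\,\Omega$ are again almost tridiagonal, and \eqref{eq:h-red} together with \eqref{eq:H} shows that $(\ad_n\,{\cal C})\Omega=0$ forces $(\ad_n\,{\cal C})\re\,\Omega=0$ and $(\ad_n\,{\cal C})\im\,\Omega=0$, while $\Omega$ is almost diagonal iff both its real and imaginary parts are, it suffices to treat a Hermitian $\Omega$.

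Next I would invoke Theorem~\ref{thm:ad-CMV} (through \eqref{eq:h-red} with $r=n-1$): the hypothesis $(\ad_n\,{\cal C})\Omega=0$ produces a linear differential operator $D=\sum_{k=0}^{n-1}D_k(z)\frac{d^k}{dz^k}$ with Laurent polynomial coefficients such that $D\bs x=\Omega\bs x$, i.e. $Dx_j=\sum_k\Omega_{jk}x_k$. Almost tridiagonality means $Dx_j\in\spn\{x_{j-1},x_j,x_{j+1}\}$ for all $j\ge N_0$, for some $N_0$. I would now read off the admissible degree shifts of $D$. Recall that $x_{2m}$ ranges over Laurent degrees $[-m,m]$ with a nonzero coefficient on the newly introduced top monomial $z^m$, while $x_{2m-1}$ ranges over $[-m,m-1]$ with a nonzero coefficient on the newly introduced bottom monomial $z^{-m}$. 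Writing $\sigma^+=\max_k(\deg D_k-k)$ and $\sigma^-=\min_k(\operatorname{ord}D_k-k)$, for large $j$ the top (resp. bottom) Laurent degree of $Dz^j$ is $j+\sigma^+$ (resp. $j+\sigma^-$), with a coefficient that is a nonvanishing polynomial in $j$. Comparing the top degree of $Dx_{2m}$, which equals $m+\sigma^+$, with that of $\spn\{x_{2m-1},x_{2m},x_{2m+1}\}$, which equals $m$, gives $\sigma^+\le0$; comparing the bottom degree of $Dx_{2m-1}$, which equals $-m+\sigma^-$, with that of the corresponding span, which equals $-m$, gives $\sigma^-\ge0$. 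Since always $\sigma^-\le\sigma^+$, this forces $\sigma^+=\sigma^-=0$, whence $\operatorname{ord}D_k=\deg D_k=k$ for every nonzero $D_k$; that is, $D_k(z)=c_kz^k$ and $D=\sum_k c_kz^k\frac{d^k}{dz^k}$ is a polynomial $p(\theta)$ in the Euler operator $\theta=z\frac{d}{dz}$.

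Finally, since $p(\theta)z^j=p(j)z^j$, the operator $D$ preserves Laurent-degree ranges, so $Dx_{2m}$ still ranges within $[-m,m]$. Matching the coefficient of $z^{-(m+1)}$ in $Dx_{2m}=\sum_{|k-2m|\le1}\Omega_{2m,k}x_k$, a degree reached on the right only by $x_{2m+1}$, whose bottom coefficient is nonzero, forces $\Omega_{2m,2m+1}=0$; symmetrically, matching the coefficient of $z^{m}$ in $Dx_{2m-1}=\sum_{|k-(2m-1)|\le1}\Omega_{2m-1,k}x_k$, a degree reached only by $x_{2m}$, forces $\Omega_{2m-1,2m}=0$. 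Hence the entire first superdiagonal of $\Omega$ vanishes for $j\ge N_0$, and Hermiticity then kills the subdiagonal as well, so $\Omega$ is almost diagonal.

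The main obstacle is the middle step: showing that almost tridiagonality pins $D$ down to a polynomial in $\theta$. The care needed there is twofold — to argue that the extreme-degree coefficients of $Dz^j$ are nonzero for large $j$ (they are nonzero polynomials in $j$, so cancellation occurs at most for finitely many $j$), and to base the two decisive inequalities $\sigma^+\le0$ and $\sigma^-\ge0$ on the extreme coefficients guaranteed nonzero by Gram--Schmidt (the top $z^m$ of $x_{2m}$ and the bottom $z^{-m}$ of $x_{2m-1}$), rather than on coefficients that might accidentally vanish. Everything else is routine bookkeeping with Laurent degrees. Note that the argument is uniform in the Verblunsky coefficients, covering the Lebesgue case with no separate treatment.
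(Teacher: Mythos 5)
Your argument is correct, but it takes a genuinely different route from the paper's. The paper never leaves the matrix level: assuming $\Omega$ Hermitian, it splits $\Omega=\widehat\Omega+\widetilde\Omega$ into a finitely supported part plus a Hermitian tridiagonal part with superdiagonal $\Lambda$, notes that $(\ad_n\,{\cal C})\widetilde\Omega=-(\ad_n\,{\cal C})\widehat\Omega$ has finite support, and then computes the extreme upper diagonal of $(\ad_n\,{\cal C})\widetilde\Omega$ explicitly with the shift calculus of \eqref{eq:LM-shift} and \eqref{eq:per-shift}: it equals $\Delta(n)S^{2n}$ with entries $\delta_k^{(n)}=\rho_k\cdots\rho_{k+n-1}\lambda_{k+n}\rho_{k+n+1}\cdots\rho_{k+2n}$, and since every $\rho_j>0$ this forces $\lambda_k=0$ for large $k$. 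You instead pass through bispectrality: \eqref{eq:h-red} together with Theorem~\ref{thm:ad-CMV} converts the hypothesis into $D\bs x=\Omega\bs x$ for a differential operator $D$ with Laurent polynomial coefficients, and your Laurent-degree bookkeeping --- correctly anchored on the extreme monomials that Gram--Schmidt guarantees appear in $x_{2m}$ and $x_{2m-1}$, the spans being recorded in the proof of Proposition~\ref{prop:Z} --- pins $D$ down to a polynomial in the Euler operator $\theta=z\frac{d}{dz}$, after which the vanishing of the superdiagonal entries is immediate. Your route makes Proposition~\ref{prop:atd-C} depend on Theorem~\ref{thm:ad-CMV} and hence on Proposition~\ref{prop:ker} of the Appendix; this is not circular, since those results are independent of Section~\ref{sec:GEN-BSP}, but the paper's proof is self-contained matrix algebra that avoids them. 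What your approach buys is structural insight and a stronger byproduct: any differential operator realizing an almost tridiagonal $\Omega$ must be $p(\theta)$, which acts diagonally on monomials --- this is morally the reason the whole bispectral problem trivializes, and it comes close to giving Theorem~\ref{thm:bis-td-C} in one stroke. What the paper's approach buys is an explicit quantitative obstruction (the products of $\rho$'s multiplying the superdiagonal) and reuse of the same shift computation that proves Proposition~\ref{prop:ker}. Two trivial points to make explicit in a final write-up: assume $\Omega\ne0$, so that $D\ne0$ and $\sigma^{\pm}$ are defined, and justify $\sigma^-\le\sigma^+$ by $\operatorname{ord}D_k\le\deg D_k$ for any single nonzero $D_k$.
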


\begin{proof}
Supposing without loss that $\Omega$ is Hermitian, the fact that $\Omega$ is almost tridiagonal means that $\Omega = \widehat\Omega + \widetilde\Omega$, where $\widehat\Omega$ has finitely many non-null coefficients and $\widetilde\Omega$ is Hermitian tridiagonal. We can express 
$$
 \widetilde\Omega = 
 \left(
 \begin{smallmatrix}
 	\widetilde\lambda_0 & \kern3pt \lambda_0
	\\[3pt] 
	\overline\lambda_0 & \kern3pt \widetilde\lambda_1 & \lambda_1
	\\[3pt] 
	& \kern3pt \overline\lambda_1 & \widetilde\lambda_2 & \lambda_3
	\\[-3pt] 
	& & \ddots & \ddots & \ddots
 \end{smallmatrix}
 \right)
 = \widetilde\Lambda + \Lambda S + S^\dagger \Lambda^\dagger,
 \qquad
 \lambda_k\in\C,
 \qquad 
 \widetilde\lambda_k\in\R, 
$$
in terms of the shift matrix $S$ given in \eqref{eq:shift} and the two diagonal matrices 
$$ 
 \Lambda = 
 \left(
 \begin{smallmatrix}
 	\lambda_0 \\ & \lambda_1 \\ & & \lambda_2 \\[-5pt] & & & \ddots
 \end{smallmatrix}
 \right),
 \kern40pt 
 \widetilde\Lambda = 
 \left(
 \begin{smallmatrix}
 	\widetilde\lambda_0 
	\\ 
	& \kern2pt \widetilde\lambda_1 
	\\ 
	& & \kern2pt \widetilde\lambda_2 
	\\[-5pt] 
	& & & \ddots
 \end{smallmatrix}
 \right).
$$

We must prove that $\Omega$ is almost diagonal, i.e. $\lambda_k=0$ for big enough $k$, whenever $(\ad_n\,{\cal C})\Omega=0$. These ad-conditions imply that $(\ad_n\,{\cal C})\widetilde\Omega=-(\ad_n\,{\cal C})\widehat\Omega$ has only finitely many non-null coefficients. The conclusions of the proposition will follow from the analysis of the top upper diagonal of $(\ad_n\,{\cal C})\widetilde\Omega$, whose coefficients must vanish up to finitely many ones. 

To obtain the top upper diagonal in question it is useful to rewrite also $\cal L$ and $\cal M$ using the shift matrix, as in \eqref{eq:LM-shift}. The top upper diagonal of $(\ad_n\,{\cal C})\widetilde\Omega$ is the term corresponding to the highest power of the shift. In the case of even $n=2m$ such a term comes exclusively from the summands ${\cal C}^m\widetilde\Omega({\cal C}^\dagger)^m+({\cal C}^\dagger)^m\widetilde\Omega{\cal C}^m$ and is given by 
\begin{equation} \label{eq:lud-C}
 ({\cal B}_eS{\cal B}_oS)^m \Lambda S ({\cal B}_oS{\cal B}_eS)^m +
 ({\cal B}_oS{\cal B}_eS)^m \Lambda S ({\cal B}_eS{\cal B}_oS)^m. 
\end{equation}
Using the identity \eqref{eq:per-shift} to permute any diagonal matrix with the powers of the shift, \eqref{eq:lud-C} reads as $\Delta(n) S^{2n}$ where
$$
\begin{aligned}
 \Delta(n) =
 & \; {\cal B}_e{\cal B}_o^{(1)}{\cal B}_e^{(2)}{\cal B}_o^{(3)} 
 \cdots {\cal B}_e^{(n-2)}{\cal B}_o^{(n-1)}
 \Lambda^{(n)}
 {\cal B}_o^{(n+1)}{\cal B}_e^{(n+2)}{\cal B}_o^{(n+3)}{\cal B}_e^{(n+4)} 
 \cdots {\cal B}_o^{(2n-1)}{\cal B}_e^{(2n)} 
 \\ 
 & + {\cal B}_o{\cal B}_e^{(1)}{\cal B}_o^{(2)}{\cal B}_e^{(3)} 
 \cdots {\cal B}_o^{(n-2)}{\cal B}_e^{(n-1)}
 \Lambda^{(n)}
 {\cal B}_e^{(n+1)}{\cal B}_o^{(n+2)}{\cal B}_e^{(n+3)}{\cal B}_o^{(n+4)} 
 \cdots {\cal B}_e^{(2n-1)}{\cal B}_o^{(2n)}
 \\
 =
 & \left(
 \begin{smallmatrix}
	\\ \delta_0^{(n)} \\ & \delta_1^{(n)} \\[-2pt] & & & \ddots
 \end{smallmatrix}
 \right),
 \qquad
 \delta_k^{(n)} = 
 \rho_k\rho_{k+1}\cdots\rho_{k+n-1}
 \lambda_{k+n}
 \rho_{k+n+1}\rho_{k+n+2}\cdots\rho_{k+2n}. 
\end{aligned}
$$
Since $\Delta(n)$ must have finitely many non-null coefficients, we conclude that $\lambda_k=0$ for big enough $k$ in the case of even $n$. A similar proof works for odd $n$. 
\end{proof}

Propositions~\ref{prop:ad-C} and \ref{prop:atd-C}, as well as Corollary~\ref{cor:ad-C}, remain true when substituting ${\cal C}$ by ${\cal C}^t$, whose effect is simply exchanging ${\cal L}\leftrightarrow{\cal M}$. This can be used to obtain our main result.

\begin{thm} \label{thm:gen-C}
If $\cal C$ is a CMV matrix whose Verblunsky coefficients are not all null, for any $n\in\N$, 
$$
 \Omega \text{ almost tridiagonal, } \; (\ad_n\,{\cal C})\Omega=0
 \;\Rightarrow\;
 \Omega \propto I.
$$
\end{thm}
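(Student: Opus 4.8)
The plan is to induct on $n$, peeling off one order of the Hermitian ad-operator at each step by means of the ad-factorization \eqref{eq:ad-F-1}, alternating between $\cal C$ and ${\cal C}^t$. The two workhorses are Proposition~\ref{prop:atd-C}, which converts ``almost tridiagonal'' into ``almost diagonal'', and Corollary~\ref{cor:ad-C}, which upgrades ``almost diagonal'' to ``$\propto I$'' precisely under the non-vanishing hypothesis on the Verblunsky coefficients. The base case $n=1$ is immediate: from $(\ad_1\,{\cal C})\Omega=0$ Proposition~\ref{prop:atd-C} gives $\Omega$ almost diagonal, and since $0\propto I$ Corollary~\ref{cor:ad-C} then forces $\Omega\propto I$.

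For the inductive step I would assume the statement at level $n-1$ for every CMV matrix with not all null Verblunsky coefficients, in particular for ${\cal C}^t$: its Verblunsky coefficients coincide with those of $\cal C$, so the non-vanishing hypothesis is inherited, and the symmetry remark preceding the theorem guarantees that Proposition~\ref{prop:atd-C} and Corollary~\ref{cor:ad-C} hold with $\cal C$ replaced by ${\cal C}^t$. Given $\Omega$ almost tridiagonal with $(\ad_n\,{\cal C})\Omega=0$, Proposition~\ref{prop:atd-C} first shows $\Omega$ is almost diagonal. Setting $\Psi:=(\ad_1\,{\cal C})\Omega={\cal M}\Omega{\cal M}^\dag-{\cal L}^\dag\Omega{\cal L}$, the banded $2\times2$-block structure of $\cal L$ and $\cal M$ makes $\Psi$ almost tridiagonal (the diagonal part of $\Omega$ conjugates to a tridiagonal matrix, and the finitely many remaining entries of $\Omega$ contribute only finitely many non-null entries). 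Now \eqref{eq:ad-F-1}, read with $n$ in place of $n+1$, gives $(\ad_{n-1}\,{\cal C}^t)\Psi=(\ad_n\,{\cal C})\Omega=0$; the induction hypothesis applied to ${\cal C}^t$ at level $n-1$ yields $\Psi\propto I$, that is $(\ad_1\,{\cal C})\Omega\propto I$. A final application of Corollary~\ref{cor:ad-C} gives $\Omega\propto I$, closing the induction.

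The only routine point is checking that $\Psi=(\ad_1\,{\cal C})\Omega$ is almost tridiagonal, which relies solely on the banded, $2\times2$-block structure of $\cal L$ and $\cal M$ together with the earlier observation that ${\cal M}\Lambda{\cal M}^\dag$ and ${\cal L}^\dag\Lambda{\cal L}$ are tridiagonal for diagonal $\Lambda$. The conceptual crux, and the step I expect to carry the most weight, is the interplay between the ad-factorization \eqref{eq:ad-F-1} and the ${\cal C}\leftrightarrow{\cal C}^t$ symmetry: this is what lets the induction reduce the order of the Hermitian ad-operator while remaining inside the same class of hypotheses, so that the single-order results of the previous sections suffice to resolve all orders simultaneously.
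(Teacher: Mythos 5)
Your proposal is correct and takes essentially the same route as the paper: the paper's proof simply unrolls your induction into an explicit chain of almost diagonal matrices $\Omega(0)=\Omega,\Omega(1),\dots,\Omega(n-1)$ obtained by alternating $(\ad_1\,{\cal C})$ and $(\ad_1\,{\cal C}^t)$ via the ad-factorization \eqref{eq:ad-F-1} and its transpose analogue, with Proposition~\ref{prop:atd-C} securing almost diagonality at each stage, and then propagates $\Omega(k)\propto I$ back up the chain using Corollary~\ref{cor:ad-C}. The ingredients, their roles, and the ${\cal C}\leftrightarrow{\cal C}^t$ alternation are identical to yours; only the packaging (explicit iteration versus formal induction quantified over both kinds of CMV matrices) differs.
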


\begin{proof}
Suppose $\Omega$ almost tridiagonal satisfying $(\ad_n\,{\cal C})\Omega=0$. From Proposition~\ref{prop:atd-C} we know that $\Omega$ must be almost diagonal. Hence, $\Omega(1):=(\ad_1\,{\cal C})\Omega$ is almost tridiagonal and, according to the ad-factorization property \eqref{eq:ad-F-1}, $(\ad_{n-1}\,{\cal C}^t)\Omega(1)=(\ad_n\,{\cal C})\Omega=0$. Then, the result analogous to Proposition~\ref{prop:atd-C} for ${\cal C}^t$ implies that $\Omega(1)$ is almost diagonal. Hence, $\Omega(2):=(\ad_1\,{\cal C}^t)\Omega(1)$ is almost tridiagonal and satisfies $(\ad_{n-2}\,{\cal C})\Omega(2)=(\ad_{n-1}\,{\cal C}^t)\Omega(1)=0$ due to the analogue of the ad-factorization \eqref{eq:ad-F-1} for ${\cal C}^t$. Thus, Proposition~\ref{prop:atd-C} implies that $\Omega(2)$ is almost diagonal. Proceeding by induction we finally find almost diagonal matrices $\Omega(0)=\Omega,\Omega(1),\Omega(2),\dots,\Omega(n-1)$ such that 
$$
 (\ad_{n-k}\,{\cal C}(k))\Omega(k)=0,
 \qquad 
 \Omega(k+1) = (\ad_1\,{\cal C}(k))\,\Omega(k), 
 \qquad
 {\cal C}(k) = 
 \begin{cases}
 	{\cal C}, & \text{ even } k,
	\\
	{\cal C}^t,&\text{ odd } k.
 \end{cases}
$$
In particular, $\Omega(n-1)$ is almost diagonal and $(\ad_1\,{\cal C}(n-1))\Omega(n-1)=0$. From Corollary~\ref{cor:ad-C} we find that $\Omega(n-1)\propto I$. Hence, $(\ad_1\,{\cal C}(n-2))\,\Omega(n-2)\propto I$, so that $\Omega(n-2)\propto I$, again by Corollary~\ref{cor:ad-C}. Proceeding in this way we obtain by induction that $\Omega=\Omega(0)\propto I$. 
\end{proof}

The hypothesis of the previous theorem are equivalent to the existence of a linear differential operator $D$ such that $D \bs x = \Omega \bs x$ for an almost tridiagonal matrix $\Omega$, where $x_n$ are the OLP related to $\cal C$. This means that $D$ preserves $\C[z,z^{-1}]$ and $Dx_n\in\spn\{x_{n-1},x_n,x_{n+1}\}$ for all but finitely many indices $n$. Therefore, bearing in mind the equivalence \eqref{eq:h-red}, Theorem~\ref{thm:gen-C} has the following translation in terms of linear differential operators and OLP on the unit circle.

\begin{thm} \label{thm:bis-td-C}
The only OLP $x_n$ on the unit circle satisfying 
$$
 Dx_n\in\spn\{x_{n-1},x_n,x_{n+1}\}, 
 \qquad 
 \forall n \ge n_0,
 \qquad
 n_0\in\N,
$$ 
for a linear differential operator $D \colon \C[z,z^{-1}] \to \C[z,z^{-1}]$ of arbitrary order, are those orthonormal with respect to the Lebesgue measure.  
\end{thm}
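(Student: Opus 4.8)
The plan is to read this statement as the OLP-side translation of Theorem~\ref{thm:gen-C} and to close it with a short contradiction ruling out the order-zero case. First I would take a genuine linear differential operator $D$ of some finite order $r \ge 1$ with $Dx_n \in \spn\{x_{n-1},x_n,x_{n+1}\}$ for all $n \ge n_0$, and record it as $D\bs x = \Omega\bs x$, reading off $\Omega_{n,k}$ as the coefficient of $x_k$ in $Dx_n$. Because $D$ has Laurent-polynomial coefficients and the $x_k$ have controlled degrees, $\Omega$ is automatically banded; the span hypothesis then makes every row $n \ge n_0$ tridiagonal, so $\Omega$ differs from a tridiagonal matrix only in the finitely many entries sitting in the first $n_0$ rows. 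Thus $\Omega$ is almost tridiagonal in the sense of Definition~\ref{def:almost}.

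Next I would convert the differential-operator hypothesis into a Hermitian ad-condition. Since $D$ has order $r$, Theorem~\ref{thm:ad-CMV} applied to the CMV matrix ${\cal C}$ of the $x_n$ gives $(\ad\,{\cal C})^{r+1}\Omega = 0$, and the equivalence \eqref{eq:h-red} rewrites this as $(\ad_{r+1}\,{\cal C})\Omega = 0$. With $n = r+1$ these are precisely the hypotheses of Theorem~\ref{thm:gen-C}.

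The heart of the argument is then a single application of Theorem~\ref{thm:gen-C}, set up as a proof by contradiction. Suppose the Verblunsky coefficients of ${\cal C}$ are not all null. Then Theorem~\ref{thm:gen-C} forces $\Omega \propto I$, whence $Dx_n = c\,x_n$ for all $n$ and $D$ acts as multiplication by the constant $c$ on the basis $\{x_n\}$. But a differential operator of the form \eqref{eq:D} acting as a scalar must have $D_k \equiv 0$ for all $k \ge 1$, i.e. order zero, contradicting $r \ge 1$. Hence all $\alpha_k = 0$, and this is exactly the CMV matrix of the Lebesgue measure, whose OLP are $x_{2m-1}(z)=z^{-m}$ and $x_{2m}(z)=z^m$. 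Conversely these Lebesgue OLP do satisfy the hypothesis, since $D = z\,d/dz$ acts diagonally on them, so the class described is nonempty and consists of the Lebesgue OLP alone.

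I expect no real obstacle, since all the analytic and combinatorial difficulty has been absorbed upstream into Theorem~\ref{thm:gen-C} and, through its proof, into Propositions~\ref{prop:ad-C} and \ref{prop:atd-C}, Corollary~\ref{cor:ad-C}, and the ad-factorization \eqref{eq:ad-F-1}. The only points needing care are bookkeeping: verifying that a finite-order operator really produces a banded $\Omega$, so that the span condition for large $n$ yields an almost tridiagonal matrix, and noting that the solution $\Omega \propto I$ is exactly the order-zero case excluded by the standing convention on linear differential operators.
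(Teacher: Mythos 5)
Your proof is correct and takes essentially the same route as the paper: the paper also deduces this theorem from Theorem~\ref{thm:gen-C}, by encoding the hypothesis as $D\bs x=\Omega\bs x$ with $\Omega$ almost tridiagonal and banded, passing to the ad-conditions via Theorem~\ref{thm:ad-CMV} and the equivalence \eqref{eq:h-red}, and observing that nonzero Verblunsky coefficients would force $\Omega\propto I$, i.e.\ an operator of order zero, which is excluded by the paper's convention on linear differential operators. Your explicit order-zero contradiction and the converse check that $D=z\,d/dz$ works for the Lebesgue OLP simply spell out details the paper leaves implicit.
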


As a particular case of this theorem we get the triviality of the general CMV bispectral problem. 

\begin{cor} \label{cor:bis-C}
The only OLP on the unit circle which, up to finitely many ones, are eigenfunctions of a linear differential operator $D \colon \C[z,z^{-1}] \to \C[z,z^{-1}]$ of arbitrary order, are those orthonormal with respect to the Lebesgue measure. 
\end{cor}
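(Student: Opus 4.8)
The plan is to recognize Corollary~\ref{cor:bis-C} as an immediate specialization of Theorem~\ref{thm:bis-td-C}, exploiting the fact that the eigenfunction condition is strictly stronger than the three-term relation \eqref{eq:tri}. The guiding observation is the trivial inclusion $\spn\{x_n\} \subset \spn\{x_{n-1},x_n,x_{n+1}\}$: if a linear differential operator $D \colon \C[z,z^{-1}] \to \C[z,z^{-1}]$ satisfies $Dx_n = \lambda_n x_n$ for all $n \ge n_0$, then in particular $Dx_n \in \spn\{x_{n-1},x_n,x_{n+1}\}$ for all $n \ge n_0$. Thus the hypotheses of the present corollary are a special case of those of Theorem~\ref{thm:bis-td-C}.

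First I would make this reduction explicit and then invoke Theorem~\ref{thm:bis-td-C} verbatim: since the OLP obey the tridiagonal difference-differential relation from the index $n_0$ onwards, that theorem forces them to be orthonormal with respect to the Lebesgue measure. This disposes of the direction that any family of OLP satisfying the eigenfunction condition up to finitely many indices must be the Lebesgue family.

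For completeness I would then recall the converse, already exhibited in Section~\ref{sec:herm-ad}: the Lebesgue OLP $x_{2m-1}(z)=z^{-m}$, $x_{2m}(z)=z^m$ genuinely satisfy the eigenfunction relation $D \bs x = \Lambda \bs x$ for the first order operator $D = z\,\dfrac{d}{dz}$, with $\Lambda$ the diagonal matrix of \eqref{eq:sol-Leb}, and in fact for every index (so one may take $n_0=0$). Combining the two directions yields the stated characterization. I expect no genuine obstacle in this argument: the entire analytic and combinatorial weight has already been absorbed into Corollary~\ref{cor:ad-C}, Proposition~\ref{prop:atd-C} and the ad-factorization \eqref{eq:ad-F-1} that together prove Theorem~\ref{thm:gen-C}, and hence Theorem~\ref{thm:bis-td-C}. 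The only point requiring a moment's care is the direction of the implication --- one must note that imposing the equality $Dx_n = \lambda_n x_n$ \emph{restricts} the banded matrix $\Omega$ with $D\bs x = \Omega\bs x$ to be almost diagonal rather than merely almost tridiagonal, so that the eigenfunction case is subsumed by, and not broader than, the three-term case already handled.
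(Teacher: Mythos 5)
Your proof is correct and follows exactly the paper's route: the paper derives Corollary~\ref{cor:bis-C} as a particular case of Theorem~\ref{thm:bis-td-C}, via precisely the observation that $Dx_n=\lambda_n x_n$ implies $Dx_n\in\spn\{x_{n-1},x_n,x_{n+1}\}$. Your additional verification of the converse (that the Lebesgue OLP are genuinely eigenfunctions of $z\,\frac{d}{dz}$) is a sensible completeness check, already contained in the paper's Section~\ref{sec:herm-ad}.
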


\section{Conclusions and outlook} \label{sec:CO}

We have shown that the CMV bispectral problem on the unit circle --at least in its traditional formulation, or even in some generalizations-- admits only the trivial solution. The question is: does this result close the topic? Our intention, based in other experiences involving the bispectral problem and its connections with the problem of C.~Shannon, is to keep looking in different directions in the context of the unit circle. Some hope is offered, for instance by results in \cite{HI,GI99,SpZh,Zh}, where one sees how getting away from polynomials leads to interesting situations. These references also show that considering not necessarily positive definite measures can be fruitful. The richness of the matrix valued Bochner problem in the case of the real line (whose full solution is still unknown, as a small sample see \cite{DG1,GPT1,GPT5,GT1}), compared to the scalar case, suggests that the triviality of the CMV bispectral problem may disappear if one admits matrix valued measures. A different path in this direction could arise from the use of more exotic kind of orthogonality on the unit circle, such as the one related to Sobolev inner products. All of this remains as a challenge.

It is important to point out that the Bochner-Krall problem --see \cite{Haine} for a very nice presentation-- is intimately connected with the study of the Toda lattice and its Virasoro symmetries. In connection with CMV matrices very relevant references are \cite{KN,Ne1,Ne2,HV}, where the Ablowitz-Ladik hierarchy is seen to be the integrable system that plays the role that the Toda lattice played for Jacobi matrices.

\section{Appendix} \label{app}

In this appendix we prove the following technical result, crucial for Theorem~\ref{thm:ad-CMV}.

\begin{prop} \label{prop:ker}
Let $x_n$ and $\chi_n$ be the OLP related to the CMV matrices $\cal C$ and ${\cal C}^t$ respectively. Then, a basis of $\ker({\cal C}-zI)^n$ is given by $\{\bs x(z),\bs x'(z),\dots,\bs x^{(n-1)}(z)\}$ and a basis of $\ker({\cal C}^t-zI)^n$ is given by $\{\bs\chi(z),\bs\chi'(z),\dots,\bs\chi^{(n-1)}(z)\}$ for every $z\in\C\setminus\{0\}$.
\end{prop}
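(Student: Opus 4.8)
The plan is to show separately that the listed vectors lie in the generalized kernel and are linearly independent, and that the kernel has dimension exactly $n$, so that they form a basis. I focus on $\cal C$ and $\bs x(z)$; the claim for ${\cal C}^t$ and $\bs\chi(z)$ then follows verbatim after exchanging ${\cal L}\leftrightarrow{\cal M}$, exactly the symmetry used throughout the paper (here $u:={\cal L}\bs\chi$ and ${\cal M}u=z\bs\chi$ play the roles that ${\cal M}\bs x$ and ${\cal L}(\cdot)$ play below).

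First I would differentiate the identity $({\cal C}-zI)\bs x(z)=0$ repeatedly in $z$. Since $\cal C$ does not depend on $z$, Leibniz gives $({\cal C}-zI)\bs x^{(k)}(z)=k\,\bs x^{(k-1)}(z)$ for every $k\ge1$, hence $({\cal C}-zI)^j\bs x^{(k)}(z)=\frac{k!}{(k-j)!}\bs x^{(k-j)}(z)$ for $j\le k$ and $0$ for $j>k$. In particular each $\bs x^{(k)}(z)$ lies in $\ker({\cal C}-zI)^{k+1}$ but not in $\ker({\cal C}-zI)^{k}$, so the $n$ vectors $\{\bs x(z),\dots,\bs x^{(n-1)}(z)\}$ all belong to $\ker({\cal C}-zI)^n$ and are linearly independent: applying successive powers of ${\cal C}-zI$ to a putative linear relation and peeling off the top term forces all coefficients to vanish, using $\bs x(z)\ne0$ (which holds because $x_0$ is the nonzero constant $1$, as $\mu$ is a probability measure).

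The crux is the reverse bound $\dim\ker({\cal C}-zI)^n\le n$, and the heart of it is the case $n=1$: a formal solution $v=(v_0,v_1,\dots)$ of ${\cal C}v=zv$ is determined by $v_0$ alone. Writing $w={\cal M}v$ and using ${\cal C}={\cal L}{\cal M}$, so that ${\cal L}w=zv$, the block relations \eqref{eq:LM} propagate the solution: $w_0=v_0$, then $w_1$ is recovered from $zv_0=\overline\alpha_0 w_0+\rho_0 w_1$, then $v_1$ from $zv_1=\rho_0 w_0-\alpha_0 w_1$, then $v_2$ from $w_1=\overline\alpha_1 v_1+\rho_1 v_2$, then $w_2,w_3,v_3,\dots$, dividing alternately by $\rho_k>0$ and by $z$. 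This is precisely where the hypotheses $z\ne0$ and $\rho_k=\sqrt{1-|\alpha_k|^2}>0$ enter. Hence $\ker({\cal C}-zI)$ is at most one-dimensional, and since $\bs x(z)$ is a nonzero solution it equals $\C\bs x(z)$. I expect this propagation step to be the only real obstacle; everything around it is formal.

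For general $n$ I would argue by induction using the first isomorphism theorem. The operator $T:={\cal C}-zI$ maps $\ker({\cal C}-zI)^n$ into $\ker({\cal C}-zI)^{n-1}$, and its kernel on this subspace is $\ker({\cal C}-zI)$, of dimension $1$; since its image is contained in $\ker({\cal C}-zI)^{n-1}$, we obtain $\dim\ker({\cal C}-zI)^n\le 1+\dim\ker({\cal C}-zI)^{n-1}\le n$ by the inductive hypothesis, which simultaneously shows that all these kernels are finite-dimensional. Comparing this upper bound with the $n$ independent vectors produced in the first step yields $\dim\ker({\cal C}-zI)^n=n$ and shows that $\{\bs x(z),\dots,\bs x^{(n-1)}(z)\}$ is indeed a basis, completing the argument.
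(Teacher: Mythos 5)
Your proposal is correct, and its first half (the inclusion $\spn\{\bs x(z),\dots,\bs x^{(n-1)}(z)\}\subset\ker({\cal C}-zI)^n$ via $({\cal C}-zI)\bs x^{(k)}(z)=k\,\bs x^{(k-1)}(z)$, plus linear independence by peeling off top terms) coincides with the paper's. Where you genuinely diverge is the dimension bound $\dim\ker({\cal C}-zI)^n\le n$. The paper attacks all $n$ at once: it multiplies $({\cal C}-zI)^n$ on the left by a unitary correction $K(n)$ (equal to $({\cal C}^\dag)^m$ or ${\cal L}^\dag({\cal C}^\dag)^m$ according to parity), which leaves the kernel unchanged but produces a $(2n+1)$-diagonal matrix whose top upper diagonal is computed explicitly via the shift-matrix expansion \eqref{eq:LM-shift}--\eqref{eq:per-shift}; its entries are $\pm z^n\rho_k\rho_{k+1}\cdots\rho_{k+n-1}$ up to the parity-dependent sign and power of $z$, hence non-null for $z\ne0$, and a banded system with invertible top diagonal is solvable by recursion from its first $n$ entries, giving dimension exactly $n$. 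You instead prove only the case $n=1$ by hand --- propagating a solution of ${\cal C}v=zv$ through the factorization ${\cal C}={\cal L}{\cal M}$, dividing alternately by $\rho_k>0$ and by $z$, which is a scalar (and conceptually identical) version of the paper's band argument --- and then bootstrap to general $n$ by the rank--nullity induction $\dim\ker T^n\le\dim\ker T+\dim\ker T^{n-1}$ applied to $T={\cal C}-zI$ restricted to $\ker T^n$. Your handling of the infinite-dimensional subtlety is sound: the isomorphism $V/\ker T\cong\operatorname{ran}(T|_V)$ needs no a priori finite-dimensionality, so finite bounds on kernel and image force $\ker T^n$ finite-dimensional. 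The trade-off: the paper's computation is heavier but uniform in $n$ and reuses machinery (the $\Gamma(n)$, $\Delta(n)$ products of $\rho$'s and the shift-permutation identity) that reappears in Proposition~\ref{prop:atd-C}, while your route isolates the single analytic fact ($\rho_k\ne0$, $z\ne0$ make first-order propagation work) and lets soft linear algebra do the rest --- more elementary, and arguably a cleaner exposition of why the result is true.
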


\begin{proof}
We will prove the result for $\cal C$ and $x_n$, the proof for ${\cal C}^t$ and $\chi_n$ begin similar.

From \eqref{eq:CX} we find that $({\cal C}-zI) \bs x^{(k)}(z) = k \, \bs x^{(k-1)}(z)$ by induction on $k$. This leads to $({\cal C}-zI)^k \bs x^{(k)} = k! \, \bs x(z)$ and $({\cal C}-zI)^{k+1} \bs x^{(k)}(z) = 0$, which implies that $\spn\{\bs x(z),\bs x'(z),\dots,\bs x^{(n-1)}(z)\} \subset \ker({\cal C}-zI)^n$. Besides, $\{\bs x(z), \bs x'(z), \dots, \bs x^{(n-1)}(z)\}$ is linearly independent for every $z\ne0$ because applying $({\cal C}-zI)^{n-1}$ to the equation $c_0(z) \bs x(z) + c_1(z) \bs x'(z) + \cdots + c_{n-1}(z) \bs x^{(n-1)}(z) = 0$ yields $(n-1)! \, c_{n-1}(z) = 0$, so an induction gives $c_k(z)=0$ for all $k$. Hence, to prove that $\{\bs x(z),\bs x'(z),\dots,\bs x^{(n-1)}(z)\}$ is a basis of $\ker({\cal C}-zI)^n$ we only need to show that $\dim\ker({\cal C}-zI)^n=n$.

To determine $\dim\ker({\cal C}-zI)^n$ note that, due to the unitarity of $\cal L$ and $\cal M$, multiplying $({\cal C}-zI)^n$ on the left by ${\cal L}^\dag$ or ${\cal M}^\dag$ does not change its kernel. In particular, $\ker({\cal C}-zI)^n=\ker[K(n)({\cal C}-zI)^n]$, where
$$
 K(2m) = ({\cal C}^\dag)^m, 
 \kern40pt 
 K(2m+1) = {\cal L}^\dag ({\cal C}^\dag)^m.
$$
The advantage of $K(n)({\cal C}-zI)^n$ over $({\cal C}-zI)^n$ is its narrower band structure, which is shown by inserting the expansion \eqref{eq:C-z power}, so that 
$$
\begin{aligned}
& \kern-5pt 
 \begin{aligned}
   	K(2m) ({\cal C}-zI)^{2m}
 	& = \sum_{j=-m}^m (-1)^{m-j} {2m \choose m-j} z^{m-j} {\cal C}^j
 	\\
 	& = (-1)^{m} {2m \choose m} z^{m}
 	+ \sum_{j=1}^m (-1)^{m-j} {2m \choose m-j} 
 	\left(z^{m-j} {\cal C}^j + z^{m+j} ({\cal C}^\dag)^j \right),
 \end{aligned}
\\
& \kern-5pt 
 \begin{aligned}
 	K(2m+1) ({\cal C}-zI)^{2m+1}
 	& = \sum_{j=-m-1}^m (-1)^{m-j} {2m+1 \choose m-j} 
	z^{m-j} {\cal M} {\cal C}^j
	\\
	& = \sum_{j=0}^m (-1)^{m-j} {2m+1 \choose m-j} 
	\left(z^{m-j} {\cal M} {\cal C}^j 
	- z^{m+j+1} {\cal L}^\dag ({\cal C}^\dag)^j\right).
 \end{aligned}
\end{aligned}
$$
The above expressions prove that $K(n)({\cal C}-zI)^n$ is $(2n+1)$-diagonal. Besides, the coefficients of its top upper diagonal are non-null for $z\ne0$. To check this last statement note that this top upper diagonal comes exclusively from the terms 
$$
\begin{aligned}
 & {\cal C}^m + z^n ({\cal C}^\dag)^m, 
 & \quad & n=2m,
 \\
 & {\cal M} {\cal C}^m - z^n {\cal L}^\dag ({\cal C}^\dag)^m, 
 & & n=2m+1,
\end{aligned}
$$
and corresponds to the power $S^n$ when expanding in powers of the shift $S$ given in \eqref{eq:shift}. Using \eqref{eq:LM-shift} we find that such upper diagonal is 
$$
\begin{aligned}
 & ({\cal B}_e S {\cal B}_o S)^m 
 + z^n ({\cal B}_o S {\cal B}_e S)^m, 
 & \quad & n=2m,
 \\
 & {\cal B}_o S ({\cal B}_e S {\cal B}_o S)^m 
 - z^n {\cal B}_e S({\cal B}_o S {\cal B}_e S)^m, 
 & \quad & n=2m+1.
\end{aligned}
$$ 
We can permute any diagonal matrix $\Lambda$ with the powers of the shift $S$ via the identity 
\begin{equation} \label{eq:per-shift}
 S^k\Lambda=\Lambda^{(k)}S^k,
\end{equation}
where $\Lambda^{(k)}$ is obtained by deleting the first $k$ rows and columns of $\Lambda$. Therefore, the top upper diagonal in question reads as $\Gamma(n) S^n$ with
$$
 \Gamma(n) =
 \begin{cases}
 	{\cal B}_e {\cal B}_o^{(1)} 
	\cdots {\cal B}_e^{(n-2)} {\cal B}_o^{(n-1)}
 	+ z^n {\cal B}_o {\cal B}_e^{(1)} 
	\cdots {\cal B}_o^{(n-2)} {\cal B}_e^{(n-1)}, 
 	& \quad n=2m,
  	\\
 	{\cal B}_o {\cal B}_e^{(1)}  
	\cdots {\cal B}_e^{(n-2)} {\cal B}_o^{(n-1)}
 	- z^n {\cal B}_e {\cal B}_o^{(1)} 
	\cdots {\cal B}_o^{(n-2)} {\cal B}_e^{(n-1)}, 
 	& \quad n=2m+1.
 \end{cases}
$$
More explicitly,
$$
 \Gamma(n) = 
 \left(
 \begin{smallmatrix}
 	\\
 	\gamma_0^{(n)} \\ & \gamma_1^{(n)} \\[-2pt] & & \ddots
 \end{smallmatrix}
 \right),
 \quad
 \gamma_k^{(n)} = 
 \begin{cases}
 \rho_k\rho_{k+1}\cdots\rho_{k+n-1}, & k,n \text{ same parity, } 
 \\
 (-1)^nz^n\rho_k\rho_{k+1}\cdots\rho_{k+n-1}, & k, n \text{ different parity, }
 \end{cases}
$$
which clearly has non-null diagonal coefficients for $z\ne0$.

The fact that $K(n)({\cal C}-zI)^n$ is $(2n+1)$-diagonal with non-null coefficients in the top upper diagonal implies that $\dim\ker[K(n)({\cal C}-zI)^n]=n$. In other words, $\dim\ker({\cal C}-zI)^n=n$, which ends the proof of the proposition.
\end{proof}

\bigskip

\noindent{\Large \bf Acknowledgements}

The work of L. Vel\'azquez has been partially supported by the Spanish Government together with the European Regional Development Fund (ERDF) under grants MTM2011-28952-C02-01 (from Ministerio de Ciencia e Innovaci\'on of Spain) and MTM2014-53963-P (from Ministerio de Econom\'{\i}a y Competitividad of Spain), and by Project E-64 of Diputaci\'on General de Arag\'on (Spain). This author would like to thank also the Department of Mathematics from UC Berkeley for its hospitality during a stay where this work was partially developed.


\end{document}